\newtheorem{theorem}{Theorem}[section]
\newtheorem{lemma}[theorem]{Lemma}
\newtheorem{proposition}[theorem]{Proposition}
\newtheorem{remark}[theorem]{Remark}
\newcommand{\dd}{\ensuremath{\operatorname{d}\! }}
\newcommand{\dt}{\ensuremath{\operatorname{d}\! t}}
\newcommand{\ds}{\ensuremath{\operatorname{d}\! s}}
\newcommand{\dw}{\ensuremath{\operatorname{d}\! W}}
\begin{document}

\title{Infinite horizon discounted LQ optimal control problems\\
for mean-field switching diffusions\thanks{This work was supported
by the National Natural Science Foundation of China (12471414),
the Natural Science Foundation of Jiangsu Province (BK20242023),
and the Jiangsu Province Scientific Research Center of Applied Mathematics (BK20233002).}}

\author{
Kai Ding\thanks{School of Mathematics, Southeast University,
Nanjing 211189, China (dkaiye@163.com).}
\and
Xun Li\thanks{Department of Applied Mathematics,
The Hong Kong Polytechnic University, Kowloon, Hong Kong, China
(li.xun@polyu.edu.hk).}
\and
Siyu Lv\thanks{School of Mathematics, Southeast University,
Nanjing 211189, China (lvsiyu@seu.edu.cn).}
\and
Zuo Quan Xu\thanks{Department of Applied Mathematics,
The Hong Kong Polytechnic University, Kowloon, Hong Kong, China
(maxu@polyu.edu.hk).}
}

\date{}

\maketitle

\begin{abstract}
This paper investigates an infinite horizon discounted linear-quadratic (LQ)
optimal control problem for stochastic differential equations (SDEs) incorporating
regime switching and mean-field interactions. The regime switching is modeled by
a finite-state Markov chain acting as common noise, while the mean-field interactions
are characterized by the conditional expectation of the state process given the history
of the Markov chain. To address system stability in the infinite horizon setting,
a discounted factor is introduced. Within this framework, the well-posedness of
the state equation and adjoint equation -- formulated as infinite horizon mean-field
forward and backward SDEs with Markov chains, respectively -- is established, along
with the asymptotic behavior of their solutions as time approaches infinity.
A candidate optimal feedback control law is formally derived based on two algebraic
Riccati equations (AREs), which are introduced for the first time in this context.
The solvability of these AREs is proven through an approximation scheme involving
a sequence of Lyapunov equations, and the optimality of the proposed feedback control
law is rigorously verified using the completion of squares method. Finally, numerical
experiments are conducted to validate the theoretical findings, including solutions
to the AREs, the optimal control process, and the corresponding optimal (conditional)
state trajectory. This work provides a comprehensive framework for solving infinite
horizon discounted LQ optimal control problems in the presence of regime switching
and mean-field interactions, offering both theoretical insights and practical
computational tools.
\end{abstract}

\textbf{Keywords:} Infinite horizon, linear quadratic control, discounted framework,
Markov chain, mean-field, algebraic Riccati equation


\section{Introduction}\label{section Introduction}

As one of the fundamental classes of optimal control theory, linear-quadratic (LQ for short) control problem
has been investigated for decades since the pioneering works by Kalman \cite{Kalman1960} (for deterministic
situation) and Wonham \cite{Wonham1968} and Bismut \cite{Bismut1976} (for stochastic situation). In recent
years, the stochastic LQ control theory experienced some new significant developments; see, for example,
indefinite stochastic LQ control (Chen et al. \cite{CLZ1998}), stochastic LQ control with random
coefficients (Tang \cite{Tang2003}), and the concepts of open-loop and closed-loop solutions
(Sun et al. \cite{SLY2016}), to mention a few.

It is well-known that optimal control problems can be divided, roughly speaking, into two categories:
one is the \emph{finite horizon} case and the other one is the \emph{infinite horizon} case.
In existing literature, the research of stochastic LQ control is concerned mainly with the finite
horizon case, but much less on the infinite horizon case. One possible reason is that one needs
\emph{additionally} to deal with the issue of system stability in an infinite horizon, which makes
the problem become more difficult and intractable; see Ait Rami and Zhou \cite{AZ2000}.
Moreover, the adjoint equation (in a stochastic maximum principle) for the infinite horizon case is
an infinite horizon backward stochastic differential equation (BSDE for short), whose well-posedness
is no longer evident as there is \emph{no} terminal value will be specified in advance;
see Wei and Yu \cite{WeiYu2021} for some latest results on infinite horizon stochastic differential
equations (SDEs for short) and BSDEs in a Brownian setting.

In many infinite horizon optimal control problems arising from various different fields of finance,
management, and engineering, a discount factor is commonly introduced in the cost (or reward) functional;
see \cite{HS2001,ZhangQing2001,CLP2013}. It not only avoids the subtle issue of system stability
and makes the involved infinite integrals be convergent so that one can focus on the optimal control
theory, but also has the realistic meaning to reflect the value of time in the long run. However,
such a practice seems, somewhat unexpectedly, unusual in the study of infinite horizon LQ control problems.

On the other hand, in the systems and control community there are now two topics attracted considerable
interests, one is the mean-field control, which takes the influence of \emph{collective behavior}
on participating individuals into consideration (see Buckdahn et al. \cite{BDLP2009} and Carmona
and Delarue \cite{CD2018}), whereas the other one is the regime switching diffusion to describe
the \emph{hybrid} phenomenon that continuous dynamics and discrete events coexist in many applications
(see Yin and Zhu \cite{YinZhu2010} and Yin and Zhang \cite{YinZhang2013}). It is not an easy task to
blend the two important topics together. Recently, Nguyen et al. \cite{NYH2020} established the theory
of (finite horizon) mean-field SDEs with regime switching, where the mean-field term is characterized
as the \emph{conditional expectation} of state process given the history of the Markov chain. This work
initiated the research of (finite horizon) mean-field optimal control problems with regime switching;
for example, Nguyen et al. \cite{NNY2020,NYN2021} obtained stochastic maximum principles in the local
and global forms, respectively, Lv et al. \cite{LXZ2023,LWX2024} investigated LQ leader-follower and
nonzero-sum stochastic differential games, respectively, Zhang et al. \cite{ZZM2024,ZZM2025}
studied mixed stochastic $H_{2} / H_{\infty}$ control problems via a Stackelberg game approach,
and Jian et al. \cite{JLSY2024} discussed the convergence rate of $N$-player linear quadratic Gaussian
game with a Markov chain common noise towards its asymptotic mean-field game.

In this paper, we continue to develop such a newly proposed theory in the direction of infinite horizon
case and discounted LQ framework. The motivation of this paper is twofold. First, there is relatively rare
existing literature dealing with \emph{infinite horizon} LQ control problems, which are crucial for describing
the long-term behavior of various real-world dynamic systems that cannot be easily incorporated by the finite
horizon framework. Second, despite the broad applications of regime switching modeling and mean-field method,
the analysis of combination of these two interesting research directions in infinite horizon framework is still
blank and remains to be treated.

The main contributions of this paper include the following three aspects:
\begin{itemize}
\item As a preliminary, we first establish the \emph{global well-posedness} of the state equation and
adjoint equation (as infinite horizon linear mean-field SDE and BSDE with Markov chains, respectively)
and the \emph{asymptotic property} of their solutions when time goes to infinity; here, a key point
is the selection of an appropriate discount factor (see Assumption (H1)).
\item Next, we adopt the so-called \emph{four-step scheme} to decouple the associated stochastic
Hamiltonian system consisting of the state equation and adjoint equation, and to derive two
algebraic Riccati equations (AREs for short). The solvability of these two AREs, which is interesting
in its own right, is obtained via an approximation approach of a sequence of \emph{Lyapunov equations}.
Then, we can formally construct a (candidate) optimal feedback control law based on solutions to the
two AREs; the optimality will be verified by the completion of squares method.
\item Finally, an example with a four-state Markov chain is provided to show the effectiveness of the
obtained results. Using the quasi-linearization method via Lyapunov equations, we numerically solve
the corresponding AREs, and display the behavior of optimal control process and optimal (conditional)
state trajectory.
\end{itemize}

The rest of this paper is organized as follows. Section \ref{section Problem formulation}
presents the problem formulation. Section \ref{Section Equation} establishes the solvability
of some infinite horizon mean-field SDEs and BSDEs with Markov chains. Section \ref{Section Riccati}
gives a formal derivation for the AREs and a candidate optimal feedback control.
Section \ref{Section Riccati solv} is devoted to the solvability of the AREs.
Then, Section \ref{Section optimality} verifies the optimality of the candidate.
Section \ref{Section Numerical experiments} reports a numerical experiment to illustrate
the theoretical results. Finally, some further concluding remarks are made
in Section \ref{Section Concluding remarks}.

\section{Problem formulation}\label{section Problem formulation}

Let $\mathbb{R}^{n}$ be the $n$-dimensional Euclidean space with Euclidean norm $|\cdot|$
and Euclidean inner product $\langle\cdot,\cdot\rangle$. Let $\mathbb{R}^{n\times m}$ be
the space consisting of all ($n\times m$) matrices. Let $\mathbb{S}^{n}\subset\mathbb{R}^{n\times n}$
be the set of all ($n\times n$) symmetric matrices, $\mathbb{S}^{n}_{+}$ be the set of
all ($n\times n$) positive semi-definite matrices $A$ (also denoted as $A\geq0$), and
$\widehat{\mathbb{S}}^{n}_{+}$ be the set of all ($n\times n$) positive definite matrices
$A$ (also denoted as $A>0$). Let $A^{\top}$ denote the transpose of a vector or matrix $A$,
let $A^{-1}$ denote the inverse of a square matrix $A$, let $1_{S}$ denote the indicator function
of a set $S$, and let $\mathbf{I}_n$ denote the $(n\times n)$ identity matrix.

Let $[0,\infty)$ be the infinite time horizon and $(\Omega,\mathcal{F},P)$ be a fixed probability
space on which a 1-dimensional standard Brownian motion $\{W(t),~t\geq0\}$, and a Markov chain
$\{\alpha(t),~t\geq0\}$, are defined. Assume conventionally that $W(\cdot)$ and $\alpha(\cdot)$
are independent. The Markov chain $\alpha$ takes values in a finite state space $\mathcal{M}=\{1,\ldots,M\}$.
Let $Q=(\lambda_{ij})_{i,j\in\mathcal{M}}$ be the generator (i.e., the matrix of transition rates)
of $\alpha$ with $\lambda_{ij}\geq 0$ for $i\neq j$ and $\sum_{j\in \mathcal{M}}\lambda_{ij}=0$
for each $i\in \mathcal{M}$.
For $t\geq0$, we denote $\mathcal{F}^{\alpha}_{t}=\sigma\{\alpha(s):0\leq s\leq t\}$,
$\mathcal{F}^{W}_{t}=\sigma\{W(s):0\leq s\leq t\}$,
and $\mathcal{F}_{t}\doteq\mathcal{F}^{W,\alpha}_{t}=\sigma\{W(s),\alpha(s):0\leq s\leq t\}$.

For each $i,j\in\mathcal{M}$ with $i\ne j$, let $N_{ij}(t)$ be the number of jumps of the Markov
chain $\alpha$ from state $i$ to state $j$ up to time $t$, i.e.,
$$
N_{ij}(t)=\sum_{0<s\leq t}1_{\{\alpha(s-)=i\}}1_{\{\alpha(s)=j\}}.
$$
It follows from Nguyen et al. \cite{NNY2020,NYN2021} that the process $M_{ij}(t)$ defined by
$$
M_{ij}(t)=N_{ij}(t)-\int_0^t \lambda_{ij}1_{\{\alpha(s-)=i\}}\ds
$$
is a purely discontinuous square integrable martingale with respect to $\mathcal{F}_{t}^{\alpha}$.
For any process $\Lambda(\cdot)=\{\Lambda_{ij}(\cdot)\}_{i,j\in\mathcal{M}}$, we denote for simplicity that
$$
\int_0^t \Lambda(s)\bullet \dd M(s)=\sum_{i,j\in\mathcal{M}}\int_0^t \Lambda_{ij}(s)\dd M_{ij}(s),
\quad\int_0^t \Lambda(s)\bullet \dd\;[M](s)=\sum_{i,j\in\mathcal{M}}\int_0^t \Lambda_{ij}(s)\dd\;[M_{ij}](s),
$$
where $[M_{ij}](t)$ denotes the optional quadratic variation process of $M_{ij}(t)$.

Let $r>0$ be a constant discount factor.
For a Euclidean space $\mathbb{H}$, let $L^{2,r}_{\mathcal{F}}(0,\infty;\mathbb{H})$
denote the set of all $\mathbb{H}$-valued $\mathcal{F}_{t}$-adapted processes
$\phi(\cdot)$ such that
$$
E\bigg[\int_{0}^{\infty}e^{-rt}|\phi(t)|^{2}\dt\bigg]<\infty;
$$
and let $M_{\mathcal{F}}^{2,r}(0,\infty;\mathbb{H})$ denote the set of all
$\Lambda(\cdot)=\{\Lambda_{ij}(\cdot)\}_{i,j\in\mathcal{M}}$ with each $\Lambda_{ij}(\cdot)$
being an $\mathbb{H}$-valued $\mathcal{F}_{t}$-adapted process such that
$$
E\bigg[\int_0^{\infty}e^{-rt}|\Lambda(t)|^2\bullet \dd\;[M](t)\bigg]
=\sum_{i,j\in\mathcal{M}}E\bigg[\int_0^{\infty}e^{-rt}|\Lambda_{ij}(t)|^2\dd\;[M_{ij}](t)\bigg]<\infty.
$$

In this paper, we consider the following controlled linear state equation on $[0,\infty)$:
\begin{equation}\label{state equation}
\left\{
\begin{aligned}
\dd X(t)=\;&\Big[A(\alpha(t))X(t)+\widehat{A}(\alpha(t))E[X(t)|\mathcal{F}_{t}^{\alpha}]+B(\alpha(t))u(t)\Big ]\dt\\
&+\Big[C(\alpha(t))X(t)+\widehat{C}(\alpha(t))E[X(t)|\mathcal{F}_{t}^{\alpha}]+D(\alpha(t))u(t)\Big]\dw(t),\\
X(0)=\;&x,\quad \alpha(0)=i,
\end{aligned}
\right.
\end{equation}
where $X(\cdot)$ is the state process valued in $\mathbb{R}^{n}$, $u(\cdot)$ is the control process
valued in $\mathbb{R}^{k}$, and $A(i)$, $\widehat{A}(i)$, $B(i)$, $C(i)$, $\widehat{C}(i)$, $D(i)$, $i\in\mathcal{M}$,
are constant matrices of suitable dimensions. It follows from Proposition \ref{prop:SDE-wellpose} that,
for any $u(\cdot)\in L^{2,r}_{\mathcal{F}}(0,\infty;\mathbb{R}^{k})$, the state equation (\ref{state equation})
admits a unique solution $X(\cdot)\in L^{2,r}_{\mathcal{F}}(0,\infty;\mathbb{R}^{n})$.
Each $u(\cdot)\in L^{2,r}_{\mathcal{F}}(0,\infty;\mathbb{R}^{k})$ is called an \emph{admissible control}.

\begin{remark}
In fact, the SDE \eqref{state equation} is obtained as the mean-square limit as $N\to\infty$ of
a system of interacting particles $X^{l,N}(\cdot)$ satisfying (see Nguyen et al. \cite{NYH2020}):
\[
\left\{
\begin{aligned}
\dd X^{l,N}(t)=\;&\Big[A(\alpha(t))X(t)+\widehat{A}(\alpha(t))\frac{1}{N}\sum_{l=1}^N X^{l,N}(t)+B(\alpha(t))u(t)\Big]\dt\\
&+\Big[C(\alpha(t))X(t)+\widehat{C}(\alpha(t))\frac{1}{N}\sum_{l=1}^N X^{l,N}(t)+D(\alpha(t))u(t)\Big]\dw^l(t),\\
X^{l,N}(0)=\;&x,\quad\alpha(0)=i,\quad 1\le l\le N.
\end{aligned}
\right.
\]
Here, $\{W^{l}(\cdot)\}_{l=1}^{N}$ represents a collection of independent standard Brownian motions
and the Markov chain $\alpha(\cdot)$ serves as a \emph{common noise} for all particles. Intuitively,
since all the particles depend on the history of $\alpha(\cdot)$, their average and thereby its limit
as $N\rightarrow\infty$ should also depend on $\alpha(\cdot)$. This intuition has been rigorously justified
by the law of large numbers recently established by Nguyen et al. \cite[Theorem 2.1]{NYH2020}, which
shows that the joint process $(\frac{1}{N}\sum_{l=1}^{N}X^{l,N}(\cdot),\alpha(\cdot))$ converges weakly to
$(\mu_{\alpha}(\cdot),\alpha(\cdot))$, where $\mu_{\alpha}(t)\doteq E[X(t)|\mathcal{F}_{t}^{\alpha}]$
and $X(\cdot)$ is exactly the solution to \eqref{state equation}. Since then, the study of such kind
of models and their applications have become an important topic in the area of mean-field control theory;
see, for example, \cite{NNY2020,NYN2021,GNY2024,JLSY2024}.
\end{remark}

In this paper, we want to minimize the following quadratic cost functional over all admissible
controls $u(\cdot)\in L^{2,r}_{\mathcal{F}}(0,\infty;\mathbb{R}^{k})$:
\begin{equation}\label{cost functional}
\begin{aligned}
J(x,i;u(\cdot))
=\;&\frac{1}{2}E\bigg[\int_{0}^{\infty}e^{-rt}\bigg(\Big\langle Q(\alpha(t))X(t),X(t)\Big\rangle\\
&\qquad+\Big\langle \widehat{Q}(\alpha(t))E[X(t)|\mathcal{F}_{t}^{\alpha}],E[X(t)|\mathcal{F}_{t}^{\alpha}]\Big\rangle
+\Big\langle R(\alpha(t))u(t),u(t)\Big\rangle\bigg)\dt\bigg],
\end{aligned}
\end{equation}
where $Q(i)$, $\widehat{Q}(i)$, $R(i)$, $i\in\mathcal{M}$, are constant symmetric matrices of suitable dimensions.

Throughout this paper, we impose the following assumptions:
\begin{equation}\label{H1}\tag{H1}
\left\{
\begin{aligned}
&r\mathbf{I}_n-[A(i)+A^{\top}(i)+C^{\top}(i)C(i)]>0,\quad i\in\mathcal{M},\\
&r\mathbf{I}_n-[A(i)+A^{\top}(i)+\widehat{A}(i)+\widehat{A}^{\top}(i)]>0,\quad i\in\mathcal{M},
\end{aligned}
\right.
\end{equation}
and
\begin{equation}\label{H2}\tag{H2}
Q(i)\ge0,\quad\widehat{Q}(i)\ge0,\quad R(i)>0,\quad i\in\mathcal{M}.
\end{equation}
Indeed, Assumption \eqref{H1} is a condition that gives the \emph{range} of the discount factor
$r$ and thereby guarantees the system's stability; whereas Assumption \eqref{H2} is the so-called
\emph{standard condition} in stochastic LQ control, which ensures the optimal control belongs to
$L^{2,r}_{\mathcal{F}}(0,\infty;\mathbb{R}^{k})$ immediately.
\begin{remark}
The positive definiteness conditions in \eqref{H1}-\eqref{H2} also imply uniform positive definiteness
as the state space $\mathcal{M}$ of the Markov chain is a finite set. As a consequence, the control
problem admits at most one optimal control.
\end{remark}
\begin{remark}
We emphasize that it is possible to consider more general forms of the state equation \eqref{state equation}
and of the cost functional \eqref{cost functional}, for example, involving the conditional expectation of the control
variable $E[u(t)|\mathcal{F}_{t}^{\alpha}]$. In this paper, we take the above forms in order to better illustrate
our main idea and simplify the presentation; see also \cite{NNY2020,NYN2021,GNY2024}.
\end{remark}

\begin{remark}
For simplicity of presentation, we consider in this paper only the homogeneous case of the problem. In fact,
there is no essential difficulty by incorporating non-homogeneous terms into the state equation \eqref{state equation}
and cost functional \eqref{cost functional}. On the other hand, the positive definite condition \eqref{H2}
could be weakened to the so-called uniform convexity condition (see, for example, Sun et al. \cite{SLY2016}),
which will be studied in our future work.
\end{remark}

\section{Conditional mean-field SDEs and BSDEs}\label{Section Equation}

In this section, we establish the global solvability of the state equation and the adjoint equation
(as infinite horizon mean-field SDE and BSDE with Markov chains, respectively) and the asymptotic
behavior of their solutions in the infinity.

In fact, from Nguyen et al. \cite[Lemma 2.3]{NYN2021}, for any fixed $T>0$ and any admissible control,
the state equation (\ref{state equation}) admits a unique solution $X(\cdot)$ on the finite horizon $[0,T]$ such that
$$
E\bigg[\int_{0}^{T}|X(t)|^2\dt\bigg]<\infty.
$$
However, it cannot extend automatically to the infinite horizon case (i.e., $T=\infty$) and ensure that
$$
E\bigg[\int_0^{\infty}|X(t)|^2\dt\bigg]<\infty.
$$
In the following, Proposition \ref{prop:SDE-wellpose} gives the global solvability of SDE (\ref{state equation})
on the infinite horizon and in a discounted solution space $L_{\mathcal{F}}^{2,r}(0,\infty;\mathbb{R}^{n})$.
First, we introduce the decomposition:
\begin{equation}\label{decomposition}
X(t)=(X(t)-E[X(t)|\mathcal{F}_{t}^{\alpha}])
+E[X(t)|\mathcal{F}_{t}^{\alpha}]\doteq\widecheck{X}(t)+\widehat{X}(t).
\end{equation}
Accordingly, for any $u(\cdot)\in L_{\mathcal{F}}^{2,r}(0,\infty;\mathbb{R}^{k})$, we denote
\begin{equation*}
u(t)=(u(t)-E[u(t)|\mathcal{F}_{t}^{\alpha}])
+E[u(t)|\mathcal{F}_{t}^{\alpha}]\doteq\widecheck{u}(t)+\widehat{u}(t),
\end{equation*}
and it follows from Jensen's inequality and Fubini's Theorem that
\begin{equation}\label{uhat-wellpose}
E\bigg[\int_0^{\infty} e^{-rt}|\widehat{u}(t)|^2\dt\bigg]
=E\bigg[\int_0^{\infty} e^{-rt}|E[u(t)|\mathcal{F}_{t}^{\alpha}]|^2\dt\bigg]
\le E\bigg[\int_0^{\infty} e^{-rt}|u(t)|^2\dt\bigg]<\infty.
\end{equation}
Hence, $\widehat{u}(\cdot)\in L_{\mathcal{F}}^{2,r}(0,\infty;\mathbb{R}^k)$ and then
$\widecheck{u}(\cdot)=u(\cdot)-\widehat{u}(\cdot)\in L_{\mathcal{F}}^{2,r}(0,\infty;\mathbb{R}^k)$.
It turns out that the processes $\widecheck{X}(\cdot)$ and $\widehat{X}(\cdot)$ satisfy
\begin{equation*}
\left\{
\begin{aligned}
\dd\widehat{X}(t)=\;&\Big[\Big(A(\alpha(t))+\widehat{A}(\alpha(t))\Big)\widehat{X}(t)
+B(\alpha(t))\widehat{u}(t)\Big ]\dt,\\
\widehat{X}(0)=\;&x,\quad \alpha(0)=i,
\end{aligned}
\right.
\end{equation*}
and
\begin{equation*}
\left\{
\begin{aligned}
\dd \widecheck{X}(t)=\;&\Big[A(\alpha(t))\widecheck{X}(t)+B(\alpha(t))\widecheck{u}(t)\Big ]\dt\\
&+\Big[C(\alpha(t))\widecheck{X}(t)+\Big(C(\alpha(t))+\widehat{C}(\alpha(t))\Big)\widehat{X}(t)
+D(\alpha(t))u(t)\Big]\dw(t),\\
\widecheck{X}(0)=\;&0,\quad \alpha(0)=i,
\end{aligned}
\right.
\end{equation*}
respectively. Then, we have the following solvability result for the state equation (\ref{state equation}).
\begin{proposition}\label{prop:SDE-wellpose}
Let Assumption \eqref{H1} hold. For any $u(\cdot)\in L_{\mathcal{F}}^{2,r}(0,\infty;\mathbb{R}^k)$,
the state equation \eqref{state equation} admits a unique solution
$X(\cdot)\in L_{\mathcal{F}}^{2,r}(0,\infty;\mathbb{R}^n)$. Furthermore, there exists a constant $K>0$,
which depends only on $r$ and coefficients $A$, $\widehat{A}$, $B$, $C$, such that
\begin{equation}\label{X-wellpose}
E\bigg[\int_0^\infty e^{-rt}|X(t)|^2\dt\bigg]
\leq K\bigg(|x|^2+E\bigg[\int_0^{\infty}e^{-rt}|u(t)|^2\dt\bigg]\bigg),
\end{equation}
and
\begin{equation}\label{Xlim}
\lim_{T\rightarrow \infty}E\Big[e^{-rT}|X(T)|^2\Big]=0.
\end{equation}
\end{proposition}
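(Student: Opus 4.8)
The plan is to use the decomposition $X=\widecheck X+\widehat X$ already set up and to treat the (pathwise) linear ODE for $\widehat X$ and the linear SDE for $\widecheck X$ separately: the second inequality in \eqref{H1} will supply the dissipativity needed for $\widehat X$, and the first inequality the dissipativity for $\widecheck X$. Existence and uniqueness on each finite interval $[0,T]$ are already available from \cite[Lemma 2.3]{NYN2021}, so what remains is the time-uniform bound in the discounted space, from which \eqref{X-wellpose} and \eqref{Xlim} will follow.

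\textbf{Step 1: the estimate for $\widehat X$.} For a fixed trajectory of $\alpha$ the process $\widehat X$ solves a linear ODE, so the chain rule gives, for every $T>0$,
\[
e^{-rT}|\widehat X(T)|^2=|x|^2+\int_0^T e^{-rt}\Big[\big\langle\big(A(\alpha(t))+A^\top(\alpha(t))+\widehat A(\alpha(t))+\widehat A^\top(\alpha(t))-r\mathbf I_n\big)\widehat X(t),\widehat X(t)\big\rangle+2\big\langle\widehat X(t),B(\alpha(t))\widehat u(t)\big\rangle\Big]\dt.
\]
Since $\mathcal M$ is finite, the second inequality in \eqref{H1} provides $\delta>0$ with the quadratic form bounded above by $-\delta|\widehat X(t)|^2$; I would absorb $2\langle\widehat X,B\widehat u\rangle$ via Young's inequality, drop the nonnegative left-hand side, and take expectations to get $\frac{\delta}{2}E[\int_0^T e^{-rt}|\widehat X(t)|^2\dt]\le|x|^2+cE[\int_0^T e^{-rt}|\widehat u(t)|^2\dt]$. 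Letting $T\to\infty$ and using \eqref{uhat-wellpose} then yields $\widehat X\in L^{2,r}_{\mathcal F}(0,\infty;\mathbb R^n)$ with $E[\int_0^\infty e^{-rt}|\widehat X|^2\dt]\le K_1(|x|^2+E[\int_0^\infty e^{-rt}|u|^2\dt])$.

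\textbf{Step 2: the estimate for $\widecheck X$.} Next I would apply Itô's formula to $e^{-rt}|\widecheck X(t)|^2$; writing $\eta:=(C(\alpha)+\widehat C(\alpha))\widehat X+D(\alpha)u$ for brevity, the drift part reads
\[
e^{-rt}\Big[\big\langle\big(A(\alpha)+A^\top(\alpha)+C^\top(\alpha)C(\alpha)-r\mathbf I_n\big)\widecheck X,\widecheck X\big\rangle+2\langle\widecheck X,B(\alpha)\widecheck u\rangle+2\big\langle C(\alpha)\widecheck X,\eta\big\rangle+|\eta|^2\Big],
\]
plus a stochastic integral. The first inequality in \eqref{H1} makes the leading quadratic form $\le-\delta'|\widecheck X|^2$ for some $\delta'>0$, and Young's inequality absorbs $2\langle\widecheck X,B\widecheck u\rangle+2\langle C\widecheck X,\eta\rangle$ into $\tfrac{\delta'}{2}|\widecheck X|^2$ plus a constant multiple of $|\widehat u|^2+|\eta|^2$, while $|\eta|^2\le c(|\widehat X|^2+|u|^2)$. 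After a standard localization to turn the stochastic integral into a true martingale, taking expectations and using $\widecheck X(0)=0$ will give
\[
E\big[e^{-rT}|\widecheck X(T)|^2\big]+\tfrac{\delta'}{2}E\Big[\int_0^T e^{-rt}|\widecheck X(t)|^2\dt\Big]\le C\,E\Big[\int_0^T e^{-rt}\big(|\widehat X(t)|^2+|u(t)|^2\big)\dt\Big];
\]
sending $T\to\infty$, inserting the Step 1 bound, and using $|X|^2\le2|\widecheck X|^2+2|\widehat X|^2$ proves $X\in L^{2,r}_{\mathcal F}(0,\infty;\mathbb R^n)$ together with \eqref{X-wellpose}.

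\textbf{Step 3: asymptotics, existence, uniqueness, and the main difficulty.} For \eqref{Xlim} I would argue that, from the exact Itô identities, $t\mapsto E[e^{-rt}|\widehat X(t)|^2]$ and $t\mapsto E[e^{-rt}|\widecheck X(t)|^2]$ both converge as $t\to\infty$ (their derivatives being integrands dominated by a function in $L^1(e^{-rt}\dt\otimes\dd P)$ thanks to Steps 1--2); being nonnegative and integrable in $t$ over $[0,\infty)$, these limits must vanish, so $E[e^{-rT}|X(T)|^2]\le2E[e^{-rT}|\widecheck X(T)|^2]+2E[e^{-rT}|\widehat X(T)|^2]\to0$. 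Global existence is obtained by solving first the linear ODE for $\widehat X$, then the linear SDE for $\widecheck X$, setting $X=\widecheck X+\widehat X$, and checking $E[\widecheck X(t)\,|\,\mathcal F_t^\alpha]=0$ — which makes $E[X(t)|\mathcal F_t^\alpha]=\widehat X(t)$, so that $X$ indeed solves \eqref{state equation} — and this holds because $E[\widecheck u(t)|\mathcal F_t^\alpha]=0$ and, $W$ being independent of $\alpha$, conditioning on $\mathcal F_t^\alpha$ kills the $\dw$-term, leaving $E[\widecheck X(\cdot)|\mathcal F_\cdot^\alpha]$ to solve a homogeneous linear ODE started at $0$. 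Uniqueness is immediate since any two $L^{2,r}$-solutions agree on each $[0,T]$ by \cite[Lemma 2.3]{NYN2021}. I expect the genuinely delicate point to be Step 2: one must keep the term $\langle C^\top(\alpha)C(\alpha)\widecheck X,\widecheck X\rangle$ intact (rather than estimating it crudely) so that \eqref{H1} can be used, and choose the Young weights small enough that $-\delta'|\widecheck X|^2$ survives after all cross terms are absorbed, the coupling to $\widehat X$ through $\eta$ then being harmless; the localization and dominated-convergence steps are routine.
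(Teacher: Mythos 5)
Your proposal is correct and follows essentially the same route as the paper's proof: the same decomposition $X=\widecheck X+\widehat X$, It\^o's formula applied to $e^{-rt}|\widehat X|^2$ and $e^{-rt}|\widecheck X|^2$ with the two inequalities of \eqref{H1} supplying dissipativity, Young's inequality to absorb the cross terms, and the same combination yielding \eqref{X-wellpose}. Your argument for \eqref{Xlim} (integrability plus control of increments forces the limit to vanish) is the same mechanism the paper phrases as uniform continuity of $T\mapsto E[e^{-rT}|\widehat X(T)|^2]$, and your extra check that $E[\widecheck X(t)\,|\,\mathcal F_t^\alpha]=0$ is a welcome detail the paper leaves implicit.
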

\begin{proof}
In the following, we treat $\widehat{X}(\cdot)$ and $\widecheck{X}(\cdot)$
separately according to the decomposition \eqref{decomposition}.
Let $\widetilde{A}(i)\doteq A(i)+\widehat{A}(i)$. Applying It\^{o}'s formula
to $e^{-rt}|\widehat{X}(t)|^2$ over $[0,T]$ for any $T>0$, we have
\begin{equation}\label{X2-Ito}
\begin{aligned}
E\Big[e^{-rT}|\widehat{X}(T)|^2\Big]
=\;&|x|^2+E\bigg[\int_0^T e^{-rt}\Big\langle\Big(\widetilde{A}(\alpha(t))+\widetilde{A}^{\top}(\alpha(t))
-r\mathbf{I}_n\Big)\widehat{X}(t),\widehat{X}(t)\Big\rangle\dt\bigg]\\
&+2E\bigg[\int_0^T e^{-rt}\Big\langle\widehat{X}(t),B(\alpha(t))\widehat{u}(t)\Big\rangle\dt\bigg].
\end{aligned}
\end{equation}
It follows from Assumption \eqref{H1} that $\widetilde{A}(i)+\widetilde{A}^\top(i)-r\mathbf{I}_n<-m\mathbf{I}_n$
for some $m>0$.  Hence
\begin{equation}\label{X2-Ito-esti}
E\Big[e^{-rT}|\widehat{X}(T)|^2\Big]
+\frac{m}{2}E\bigg[\int_0^T e^{-rt}|\widehat{X}(t)|^2\dt\bigg]
\le |x|^2+\frac{2}{m}E\bigg[\int_0^T e^{-rt}|B(\alpha(t))\widehat{u}(t)|^2\dt\bigg].
\end{equation}
Let $\sigma(t)\doteq[C(\alpha(t))+\widehat{C}(\alpha(t))]\widehat{X}(t)
+D(\alpha(t))u(t)\in L_{\mathcal{F}}^{2,r}(0,\infty;\mathbb{R}^n)$.
Applying It\^{o}'s formula to $e^{-rt}|\widecheck{X}(t)|^2$ over $[0,T]$ for any finite $T>0$, we obtain
\begin{equation*}\label{X1-Ito}
\begin{aligned}
&E\Big[e^{-rT}|\widecheck{X}(T)|^2\Big]\\
=\;&E\bigg[\int_0^T e^{-rt}\Big\langle\Big(A(\alpha(t))+A^{\top}(\alpha(t))+C^{\top}(\alpha(t))C(\alpha(t))
-r\mathbf{I}_n\Big)\widecheck{X}(t),\widecheck{X}(t)\Big\rangle\dt\bigg]\\
&+E\bigg[\int_0^T e^{-rt}\Big(2\Big\langle\widecheck{X}(t),B(\alpha(t))\widecheck{u}(t)
+C^{\top}(\alpha(t))\sigma(t)\Big\rangle+|\sigma(t)|^2\Big)\dt\bigg].
\end{aligned}
\end{equation*}
It follows from Assumption \eqref{H1} again that $A(i)+A^\top(i)+C^\top(i)C(i)-r\mathbf{I}_n<-m\mathbf{I}_n$
for some $m>0$, and moreover $C^\top(i)C(i)\le \mu\mathbf{I}_n$ for some $\mu>0$. Then,
\begin{equation}\label{X1-Ito-esti}
\begin{aligned}
&E\Big[e^{-rT}|\widecheck{X}(T)|^2\Big]
+\frac{m}{2}E\bigg[\int_0^T e^{-rt}|\widecheck{X}(t)|^2\dt\bigg]\\
\le&\frac{4}{m}E\bigg[\int_0^T e^{-rt}|B(\alpha(t))\widecheck{u}(t)|^2\dt\bigg]
+\bigg(\frac{4\mu}{m}+1\bigg)E\bigg[\int_0^T e^{-rt}|\sigma(t)|^2\dt\bigg].
\end{aligned}
\end{equation}
Combining \eqref{X2-Ito-esti} with \eqref{X1-Ito-esti} and noting \eqref{uhat-wellpose},
there exists a constant $K>0$ such that
$$
E\bigg[\int_0^T e^{-rt}|X(t)|^2\dt\bigg]
\le K\bigg(|x|^2+E\bigg[\int_0^T e^{-rt}|u(t)|^2\dt\bigg]\bigg).
$$
Sending $T\rightarrow\infty$ and applying the monotone convergence theorem, we deduce that
the estimate \eqref{X-wellpose} holds and $X(\cdot)\in L_{\mathcal{F}}^{2,r}(0,\infty;\mathbb{R}^n)$.

Next, we prove the asymptotic property (\ref{Xlim}). Indeed, for any finite $T_2>T_1>0$,
substituting $T$ with $T_1$ and $T_2$ in (\ref{X2-Ito}), we have
\begin{equation*}
\Big|E[e^{-rT_2}|\widehat{X}(T_2)|^2]-E[e^{-rT_1}|\widehat{X}(T_1)|^2]\Big|
\le KE\bigg[\int_{T_1}^{T_2} e^{-rt}\Big(|\widehat{X}(t)|^2+|B(\alpha(t))\widehat{u}(t)|^2\Big)\dt\bigg].
\end{equation*}
In view of $\widehat{X}(\cdot)\in L_{\mathcal{F}}^{2,r}(0,\infty;\mathbb{R}^{n})$ and
$\widehat{u}(\cdot)\in L_{\mathcal{F}}^{2,r}(0,\infty;\mathbb{R}^{k})$, we have the map
$T\mapsto E[e^{-rT}|\widehat{X}(T)|^2]$ is uniformly continuous, and then
$\lim_{T\rightarrow \infty}E[e^{-rT}|\widehat{X}(T)|^2]=0$. Similarly,
we can also derive that $\lim_{T\rightarrow \infty}E[e^{-rT}|\widecheck{X}(T)|^2]=0$.
Then the desired result \eqref{Xlim} follows.
\end{proof}
Next, we turn to the adjoint equation for our discounted optimal control problem.
First, the corresponding Hamiltonian function in a stochastic maximum principle
is given by
\begin{equation}\label{H}
\begin{aligned}
H(x,\widehat{x},i,u,p,q)=\;&\Big\langle A(i)x+\widehat{A}(i)\widehat{x}+B(i)u,p\Big\rangle
+\Big\langle C(i)x+\widehat{C}(i)\widehat{x}+D(i)u,q\Big\rangle\\
&+\frac{1}{2}e^{-rt}\bigg(\Big\langle Q(i)x,x\Big\rangle
+\Big\langle \widehat{Q}(i)\widehat{x},\widehat{x}\Big\rangle
+\Big\langle R(i)u,u\Big\rangle\bigg)-r\langle x,p\rangle.
\end{aligned}
\end{equation}
Based on \eqref{H}, we have the following adjoint equation in the form of an infinite horizon
mean-field BSDE with Markov chain:
\begin{equation}\label{adjoint_PQK}
\begin{aligned}
\dd p(t)=\;&-\Big[A^{\top}(\alpha(t))p(t)+\widehat{A}^{\top}(\alpha(t))\widehat{p}(t)
+C^{\top}(\alpha(t))q(t)+\widehat{C}^{\top}(\alpha(t))\widehat{q}(t)\\
&\qquad+Q(\alpha(t))X(t)+\widehat{Q}(\alpha(t))\widehat{X}(t)-rp(t)\Big ]\dt
+q(t)\dw(t)+\Phi(t)\bullet \dd M(t).
\end{aligned}
\end{equation}
Similarly, we decompose the solution $(p(\cdot),q(\cdot),\Phi(\cdot))$ as follows:
$$
\Gamma(t)
=(\Gamma(t)-E[\Gamma(t)|\mathcal{F}_{t}^{\alpha}])+E[\Gamma(t)|\mathcal{F}_{t}^{\alpha}]
\doteq\widecheck{\Gamma}(t)+\widehat{\Gamma}(t),
$$
for $\Gamma=p,q,\Phi$. Then, the processes $\widecheck{p}(\cdot)$ and $\widehat{p}(\cdot)$ satisfy
$$
\dd \widecheck{p}(t)=-\Big[A^{\top}(\alpha(t))\widecheck{p}(t)+C^{\top}(\alpha(t))\widecheck{q}(t)
-r\widecheck{p}(t)+\widecheck{\varphi}(t)\Big ]\dt+q(t)\dw(t)+\widecheck{\Phi}(t)\bullet \dd M(t),
$$
and
\begin{equation*}
\begin{aligned}
\dd\widehat{p}(t)=\;&-\Big[\Big(A(\alpha(t))+\widehat{A}(\alpha(t))\Big)^{\top}\widehat{p}(t)
+\Big(C(\alpha(t))+\widehat{C}(\alpha(t))\Big)^{\top}\widehat{q}(t)-r\widehat{p}(t)+\widehat{\varphi}(t)\Big ]\dt\\
&+\widehat{\Phi}(t)\bullet \dd M(t),
\end{aligned}
\end{equation*}
where we denote
$$
\varphi(t)=Q(\alpha(t))X(t)+\widehat{Q}(\alpha(t))\widehat{X}(t),
$$
and it follows that
$$
\widehat{\varphi}(t)=[Q(\alpha(t))+\widehat{Q}(\alpha(t))]\widehat{X}(t),
\quad\widecheck{\varphi}(t)=Q(\alpha(t))\widecheck{X}(t).
$$
\begin{lemma}\label{lem:priori}
Let Assumption \eqref{H1} hold. Suppose that there exists a solution $(p(\cdot),q(\cdot),\Phi(\cdot))
\in (L_{\mathcal{F}}^{2,r}(0,\infty;\mathbb{R}^n))^{2}\times M_{\mathcal{F}}^{2,r}(0,\infty;\mathbb{R}^{n})$
to the adjoint equation \eqref{adjoint_PQK}. Then we have
\begin{equation}\label{Plim}
\lim_{T\rightarrow\infty}E[e^{-rT}|p(T)|^2]=0.
\end{equation}
Furthermore, there exists a constant $K>0$, which depends only on $r$ and coefficients
$A$, $\widehat{A}$, $B$, $C$, $\widehat{C}$, $Q$, $\widehat{Q}$, such that
\begin{equation}\label{P-priori}
\begin{aligned}
&E\bigg[|p(0)|^2+\int_0^{\infty} e^{-rt}[|p(t)|^2+|q(t)|^2 ]\dt
+\int_0^{\infty} e^{-rt}|\Phi(t)|^2\bullet \dd\;[M](t)\bigg]\\
\leq&KE\bigg[\int_0^{\infty} e^{-rt}|X(t)|^2\dt\bigg].
\end{aligned}
\end{equation}
\end{lemma}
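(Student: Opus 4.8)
The plan is to use the decomposition $p=\widecheck p+\widehat p$, $q=\widecheck q+\widehat q$, $\Phi=\widecheck\Phi+\widehat\Phi$ together with the equations for $\widecheck p$ and $\widehat p$ displayed just above, and to treat the two components separately: the two lines of Assumption \eqref{H1} are exactly the dissipativity conditions for the $\widecheck p$- and $\widehat p$-dynamics, respectively. Throughout I use that $X(\cdot)\in L^{2,r}_{\mathcal F}(0,\infty;\mathbb R^n)$ by Proposition \ref{prop:SDE-wellpose}, so that the source terms $\widecheck\varphi=Q(\alpha)\widecheck X$ and $\widehat\varphi=(Q(\alpha)+\widehat Q(\alpha))\widehat X$ obey $E\int_0^\infty e^{-rt}(|\widecheck\varphi(t)|^2+|\widehat\varphi(t)|^2)\dt\le K\,E\int_0^\infty e^{-rt}|X(t)|^2\dt$, using $E|\widecheck X(t)|^2\le E|X(t)|^2$ and $E|\widehat X(t)|^2\le E|X(t)|^2$. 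The workhorse is It\^{o}'s formula for $e^{-rt}|\widecheck p(t)|^2$ and $e^{-rt}|\widehat p(t)|^2$ on a finite window $[0,T]$, followed by taking expectations (a standard localization discards the local-martingale terms) and then letting $T\to\infty$.

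To prove \eqref{Plim}, I apply It\^{o} to $e^{-rt}|\widecheck p(t)|^2$ on $[0,T_1]$ and on $[0,T_2]$ and subtract: the difference $E[e^{-rT_2}|\widecheck p(T_2)|^2]-E[e^{-rT_1}|\widecheck p(T_1)|^2]$ is, in absolute value, at most $K\,E\int_{T_1}^{T_2}e^{-rt}(|\widecheck p|^2+|\widecheck q|^2+|q|^2+|\widecheck\varphi|^2)\dt+K\,E\int_{T_1}^{T_2}e^{-rt}|\widecheck\Phi|^2\bullet\dd\;[M](t)$. Since $\widecheck p,\widecheck q,q,\widecheck\varphi\in L^{2,r}_{\mathcal F}$ and $\widecheck\Phi\in M^{2,r}_{\mathcal F}$, this tail tends to $0$, so $T\mapsto E[e^{-rT}|\widecheck p(T)|^2]$ converges as $T\to\infty$; since $\int_0^\infty E[e^{-rt}|\widecheck p(t)|^2]\dt=E\int_0^\infty e^{-rt}|\widecheck p(t)|^2\dt<\infty$ by Tonelli, the limit must be $0$. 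The same holds for $\widehat p$, and \eqref{Plim} follows from $|p|^2\le2|\widecheck p|^2+2|\widehat p|^2$.

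For \eqref{P-priori} I first treat $\widecheck p$. Applying It\^{o} to $e^{-rt}|\widecheck p(t)|^2$ on $[0,T]$, using $\widecheck p(0)=0$ (since $\mathcal F_0$ is trivial, $p(0)$ is deterministic), taking expectations, and using $E[\langle\widecheck q,\widehat q\rangle]=0$ so that $E|q|^2=E|\widecheck q|^2+E|\widehat q|^2$, I collect the Brownian quadratic-variation term $|q|^2$ with the drift cross term $-2\langle C(\alpha)\widecheck p,\widecheck q\rangle$ and complete the square in $\widecheck q$; the first line of \eqref{H1} (which gives $r\mathbf I_n-A(i)-A^\top(i)-C^\top(i)C(i)\ge m\mathbf I_n$ for some $m>0$) then turns this part into a quantity bounded below by $m|\widecheck p|^2+|\widecheck q-C(\alpha)\widecheck p|^2+|\widehat q|^2$. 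Bounding $2\langle\widecheck p,\widecheck\varphi\rangle\le\tfrac{m}{2}|\widecheck p|^2+\tfrac{2}{m}|\widecheck\varphi|^2$, letting $T\to\infty$ and invoking \eqref{Plim}, and using $|\widecheck q|^2\le2|\widecheck q-C(\alpha)\widecheck p|^2+K|\widecheck p|^2$, I obtain
\[
E\bigg[\int_0^\infty e^{-rt}\big(|\widecheck p(t)|^2+|\widecheck q(t)|^2+|\widehat q(t)|^2\big)\dt+\int_0^\infty e^{-rt}|\widecheck\Phi(t)|^2\bullet\dd\;[M](t)\bigg]\le K\,E\bigg[\int_0^\infty e^{-rt}|X(t)|^2\dt\bigg].
\]
Next I treat $\widehat p$. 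Applying It\^{o} to $e^{-rt}|\widehat p(t)|^2$ on $[0,T]$ — now there is no Brownian part, hence no $|q|^2$ — and rearranging gives an identity whose left side is $|\widehat p(0)|^2$ plus the nonnegative quantities $E\int_0^T e^{-rt}\langle(r\mathbf I_n-\widetilde A(\alpha)-\widetilde A^\top(\alpha))\widehat p,\widehat p\rangle\dt$ (with $\widetilde A=A+\widehat A$) and $E\int_0^T e^{-rt}|\widehat\Phi|^2\bullet\dd\;[M](t)$, and whose right side is $E[e^{-rT}|\widehat p(T)|^2]$ plus the cross terms $2E\int_0^T e^{-rt}\langle\widehat p,\widetilde C^\top(\alpha)\widehat q\rangle\dt$ (with $\widetilde C=C+\widehat C$) and $2E\int_0^T e^{-rt}\langle\widehat p,\widehat\varphi\rangle\dt$. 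The second line of \eqref{H1} gives $r\mathbf I_n-\widetilde A(i)-\widetilde A^\top(i)\ge m\mathbf I_n$; Young's inequality absorbs the cross terms into $m|\widehat p|^2$ at the cost of a multiple of $|\widehat q|^2+|\widehat\varphi|^2$, and sending $T\to\infty$ (using \eqref{Plim} and $|p(0)|=|\widehat p(0)|$) yields $|p(0)|^2+E\int_0^\infty e^{-rt}|\widehat p|^2\dt+E\int_0^\infty e^{-rt}|\widehat\Phi|^2\bullet\dd\;[M](t)\le K\,E\int_0^\infty e^{-rt}(|\widehat q|^2+|\widehat\varphi|^2)\dt$, which is $\le K\,E\int_0^\infty e^{-rt}|X|^2\dt$ by the previous display and the bound on $\widehat\varphi$. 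Adding the two estimates and using $|p|^2\le2|\widecheck p|^2+2|\widehat p|^2$, $|q|^2\le2|\widecheck q|^2+2|\widehat q|^2$, and $|\Phi|^2\bullet\dd\;[M]\le2|\widecheck\Phi|^2\bullet\dd\;[M]+2|\widehat\Phi|^2\bullet\dd\;[M]$ delivers \eqref{P-priori}.

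The only genuinely delicate point is the bookkeeping of $\widehat q(\cdot)$: it has no equation of its own and cannot be controlled through the $\widehat p$-equation (which carries no Brownian part), so one must extract it from the Brownian quadratic variation $|q|^2=|\widecheck q|^2+|\widehat q|^2$ inside the $\widecheck p$-equation, where after completing the square it lands on the coercive side with a favorable sign and is thereby estimated before being reused to close the $\widehat p$-estimate. Everything else — the It\^{o} computations, the coercivity from \eqref{H1}, Young's inequality, and the localization — is routine.
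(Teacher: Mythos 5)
Your proposal is correct and follows essentially the same route as the paper: decompose into $\widecheck{p},\widehat{p}$, apply It\^{o}'s formula to $e^{-rt}|\widecheck{p}|^2$ and $e^{-rt}|\widehat{p}|^2$, use the two lines of \eqref{H1} for coercivity, prove \eqref{Plim} via the Cauchy/uniform-continuity argument combined with integrability, and then send $T\to\infty$. The one place where you are more explicit than the paper is the bookkeeping of $\widehat{q}$ — retaining the $|\widehat{q}|^2$ portion of the quadratic variation $|q|^2$ from the $\widecheck{p}$-estimate (via $E|q|^2=E|\widecheck{q}|^2+E|\widehat{q}|^2$) so as to absorb the $\widetilde{C}^{\top}\widehat{q}$ term in the $\widehat{p}$-estimate; the paper's displayed inequality \eqref{p-estimate(1)-1} drops that term and only implicitly recovers it when "combining" the two estimates, so your version actually closes that small gap.
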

\begin{proof}
Note that the backward SDE (\ref{adjoint_PQK}) can be rewritten in the form of a forward SDE:
\begin{equation*}
\begin{aligned}
p(t)&=p(0)-\int_0^t\Big[A^{\top}(\alpha(s))p(s)+C^{\top}(\alpha(s))q(s)
+\widehat{A}^{\top}(\alpha(s))\widehat{p}(s)+\widehat{C}^{\top}(\alpha(s))\widehat{q}(s)\\
&\qquad\qquad\qquad-rp(s)+\varphi(s)\Big]\ds+\int_0^t q(s)\dw(s)+\int_0^t \Phi(s)\bullet \dd M(s).
\end{aligned}
\end{equation*}
Using the similar method in Proposition \ref{prop:SDE-wellpose}, for any $T_2>T_1>0$,
there exists some $K>0$ such that
$$
\begin{aligned}
&\Big|E[e^{-rT_2}|p(T_2)|^2]-E[e^{-rT_1}|p(T_1)|^2]\Big|\\
\le&K\bigg[\int_{T_1}^{T_2} e^{-rt}\Big(|p(t)|^2+|q(t)|^2+|\varphi(t)|^2\Big)\dt
+\int_{T_1}^{T_2} e^{-rt}|\Phi(t)|^2\bullet \dd\;[M](t)\bigg].
\end{aligned}
$$
From $p(\cdot),q(\cdot),\varphi(\cdot)\in L_{\mathcal{F}}^{2,r}(0,\infty;\mathbb{R}^{n})$
and $\Phi(\cdot)\in M_{\mathcal{F}}^{2,r}(0,\infty;\mathbb{R}^{n})$, we obtain the uniform continuity
of the map $T\mapsto E[e^{-rT}|p(T)|^2]$ and then the asymptotic property \eqref{Plim}.

Let $\widetilde{C}(i)=C(i)+\widehat{C}(i)$.
Applying It\^{o}'s formula to $e^{-rt}|\widecheck{p}(t)|^2$ and $e^{-rt}|\widehat{p}(t)|^2$,
respectively, we have
\begin{equation}\label{p-estimate(1)}
\begin{aligned}
&E\bigg[|\widecheck{p}(0)|^2+\int_0^T e^{-rt}|q(t)|^2\dt+\int_0^T e^{-rt}|\widecheck{\Phi}(t)|^2\bullet \dd\;[M](t)\bigg]\\
=\;&E\bigg[e^{-rT}|\widecheck{p}(T)|^2
+\int_0^T e^{-rt}\Big[-r|\widecheck{p}|^2+2\Big\langle \widecheck{p},A^{\top}\widecheck{p}
+C^{\top}\widecheck{q}+\widecheck{\varphi}\Big\rangle\Big ]\dt\bigg]\\
\le&E\bigg[e^{-rT}|\widecheck{p}(T)|^2+\int_0^T e^{-rt}\bigg(\frac{1}{1+\varepsilon}|\widecheck{q}|^2
+\frac{2}{m}|\widecheck{\varphi}|^2\bigg)\dt\bigg]\\
&\quad+E\bigg[\int_0^T e^{-rt}\bigg[\bigg\langle\bigg(A+A^{\top}+(1+\varepsilon)C^{\top}C
+\frac{m}{2}\mathbf{I}_n-r\mathbf{I}_n\bigg)\widecheck{p},\widecheck{p}\bigg\rangle\bigg ]\dt\bigg],
\end{aligned}
\end{equation}
and
\begin{equation}\label{p-estimate(2)}
\begin{aligned}
&E\bigg[|\widehat{p}(0)|^2+\int_0^T e^{-rt}|\widehat{\Phi}(t)|^2\bullet \dd\;[M](t)\bigg]\\
=\;&E\bigg[e^{-rT}|\widehat{p}(T)|^2
+\int_0^T e^{-rt}\Big[-r|\widehat{p}|^2+2\Big\langle\widehat{p},\widetilde{A}^{\top}\widehat{p}
+\widetilde{C}^{\top}\widehat{q}+\widetilde{\varphi}\Big\rangle\Big ]\dt\bigg]\\
\le&E\bigg[e^{-rT}|\widehat{p}(T)|^2
+\int_0^T e^{-rt}\bigg\langle\bigg(\widetilde{A}+\widetilde{A}^{\top}
+\frac{m}{2}\mathbf{I}_n-r\mathbf{I}_n\bigg)\widehat{p},\widehat{p}\bigg\rangle\dt\bigg]\\
&\quad+\frac{4}{m}E\bigg[\int_0^T e^{-rt}\Big(|\widetilde{C}^{\top}\widehat{q}|^2
+|\widetilde{\varphi}|^2\Big)\dt\bigg].
\end{aligned}
\end{equation}
Recalling that ${A}(i)+{A}^\top(i)+C^\top(i)C(i)-r\mathbf{I}_n<-m \mathbf{I}_n$
and $C^\top(i)C(i)\le \mu\mathbf{I}_n$ for some positive constants $m$ and $\mu$,
and taking $\varepsilon=\frac{m}{4\mu}$ in \eqref{p-estimate(1)}, we have
\begin{equation}\label{p-estimate(1)-1}
\begin{aligned}
&E\bigg[|\widecheck{p}(0)|^2+\frac{m}{4}\int_0^T e^{-rt}|\widecheck{p}(t)|^2\dt
+\frac{m}{4\mu+m}\int_0^T e^{-rt}|\widecheck{q}(t)|^2\dt+\int_0^T e^{-rt}|\widecheck{\Phi}(t)|^2\bullet \dd\;[M](t)\bigg]\\
\leq&E\bigg[e^{-rT}|\widecheck{p}(T)|^2
+\frac{2}{m}\int_0^T e^{-rt}|\widecheck{\varphi}(t)|^2\dt\bigg].
\end{aligned}
\end{equation}
On the other hand, using the inequality $\widetilde{A}(i)+\widetilde{A}^\top(i)-r\mathbf{I}_n<-m \mathbf{I}_n$
in \eqref{p-estimate(2)}, one has
\begin{equation}\label{p-estimate(2)-2}
\begin{aligned}
&E\bigg[|\widehat{p}(0)|^2+\frac{m}{2}\int_0^T e^{-rt}|\widehat{p}(t)|^2\dt
+\int_0^T e^{-rt}|\widehat{\Phi}(t)|^2\bullet \dd\;[M](t)\bigg]\\
\le&E\bigg[e^{-rT}|\widehat{p}(T)|^2
+\frac{4}{m}\int_0^T e^{-rt}|\widetilde{C}^{\top}(\alpha(t))\widehat{q}(t)|^2\dt
+\frac{4}{m}\int_0^T e^{-rt}|\widehat{\varphi}(t)|^2\dt\bigg].
\end{aligned}
\end{equation}
Combining \eqref{p-estimate(1)-1} and \eqref{p-estimate(2)-2}, there should exist some $K>0$ such that
$$
\begin{aligned}
&E\bigg[|p(0)|^2+\int_0^T e^{-rt}[|p(t)|^2+|q(t)|^2 ]\dt+\int_0^T e^{-rt}|\Phi(t)|^2\bullet \dd\;[M](t)\bigg]\\
\leq&KE\bigg[e^{-rT}|p(T)|^2+\int_0^T e^{-rt}|\varphi(t)|^2\dt\bigg].
\end{aligned}
$$
By sending $T\rightarrow\infty$ and noting that $\lim_{T\rightarrow\infty}E[e^{-rT}|p(T)|^2]=0$
and $E[|\varphi(t)|^2]\le KE[|X(t)|^2]$, we obtain the priori estimate (\ref{P-priori}).
\end{proof}
Based on Lemma \ref{lem:priori}, we have the following existence and uniqueness results
for the adjoint equation (\ref{adjoint_PQK}). The proof is similar to Peng and Shi \cite[Theorem 4]{peng2000infinite}
(see also Yu \cite[Theorem 2.5]{yu2017a}), so it is omitted here for simplicity of presentation.
\begin{theorem}\label{thm:Y-wellpose}
Let Assumption \eqref{H1} hold. Then, the adjoint equation \eqref{adjoint_PQK} admits a unique
solution $(p(\cdot),q(\cdot),\Phi(\cdot))\in (L_{\mathcal{F}}^{2,r}(0,\infty;\mathbb{R}^n))^{2}
\times M_{\mathcal{F}}^{2,r}(0,\infty;\mathbb{R}^{n})$.
\end{theorem}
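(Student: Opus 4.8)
The plan is to follow the classical truncation scheme for infinite horizon BSDEs (as in Peng--Shi \cite{peng2000infinite} and Yu \cite{yu2017a}), with the a priori estimate of Lemma~\ref{lem:priori} doing the essential work. Write $\varphi(t)=Q(\alpha(t))X(t)+\widehat{Q}(\alpha(t))\widehat{X}(t)$, which belongs to $L^{2,r}_{\mathcal{F}}(0,\infty;\mathbb{R}^{n})$ since $X(\cdot)$ does. For each finite $T>0$, impose the zero terminal condition $p(T)=0$ in \eqref{adjoint_PQK}; by the standard finite horizon theory for mean-field BSDEs driven by a Brownian motion and a Markov chain (Nguyen et al. \cite{NNY2020,NYN2021}), this admits a unique solution $(p^{T}(\cdot),q^{T}(\cdot),\Phi^{T}(\cdot))$ on $[0,T]$, which I extend to $[0,\infty)$ by setting $p^{T},q^{T},\Phi^{T}$ equal to $0$ on $(T,\infty)$ (note $p^{T}(T)=0$, so no discontinuity is created).

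The decisive step is a \emph{uniform tail estimate}. Restricted to any sub-interval $[S,T]$ with $0\le S<T$, the triple $(p^{T},q^{T},\Phi^{T})$ still solves the adjoint BSDE there, with terminal value $0$ at $T$ and source $\varphi$. Re-running the It\^o computation from the proof of Lemma~\ref{lem:priori} on $[S,T]$ --- now retaining the boundary term $E[e^{-rS}|p^{T}(S)|^2]$ instead of letting it vanish in the limit --- yields, for a constant $K>0$ independent of $S$ and $T$,
\begin{equation*}
E\bigg[e^{-rS}|p^{T}(S)|^2+\int_S^T e^{-rt}\big(|p^{T}(t)|^2+|q^{T}(t)|^2\big)\dt+\int_S^T e^{-rt}|\Phi^{T}(t)|^2\bullet\dd\;[M](t)\bigg]\le K\,E\bigg[\int_S^{\infty} e^{-rt}|\varphi(t)|^2\dt\bigg].
\end{equation*}
Since $\varphi\in L^{2,r}_{\mathcal{F}}$, the right-hand side, call it $\varepsilon(S)$, decreases to $0$ as $S\to\infty$; in particular $E[e^{-rT_1}|p^{T_2}(T_1)|^2]\le\varepsilon(T_1)$ for all $T_2>T_1$. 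This bound plays the role of the missing terminal condition, which is exactly what the infinite horizon problem --- unlike the finite horizon one --- does not supply in advance.

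With this in hand, I would show the approximating family is Cauchy. For $T_2>T_1$ the difference $(\delta p,\delta q,\delta\Phi)\doteq(p^{T_2}-p^{T_1},q^{T_2}-q^{T_1},\Phi^{T_2}-\Phi^{T_1})$ solves on $[0,T_1]$ the \emph{homogeneous} adjoint BSDE (the $\varphi$-terms cancel) with terminal value $\delta p(T_1)=p^{T_2}(T_1)$; the finite horizon counterpart of \eqref{P-priori} (taken with $X\equiv0$ but nonzero terminal) gives
\begin{equation*}
E\bigg[|\delta p(0)|^2+\int_0^{T_1} e^{-rt}\big(|\delta p(t)|^2+|\delta q(t)|^2\big)\dt+\int_0^{T_1} e^{-rt}|\delta\Phi(t)|^2\bullet\dd\;[M](t)\bigg]\le K\,E\big[e^{-rT_1}|p^{T_2}(T_1)|^2\big]\le K\,\varepsilon(T_1).
\end{equation*}
Combined with the uniform tail estimate (which bounds the integrals over $[T_1,\infty)$ of the extended $q^{T_2},\Phi^{T_2},p^{T_2}$ by $K\varepsilon(T_1)$, while $q^{T_1},\Phi^{T_1},p^{T_1}$ vanish there), this shows $(p^{T},q^{T},\Phi^{T})$ is Cauchy in $(L_{\mathcal{F}}^{2,r}(0,\infty;\mathbb{R}^{n}))^{2}\times M_{\mathcal{F}}^{2,r}(0,\infty;\mathbb{R}^{n})$. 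Passing to the limit in the integral (forward) form of \eqref{adjoint_PQK} on each $[0,t]$ shows that the limit $(p,q,\Phi)$ belongs to this space and solves the adjoint equation, and \eqref{Plim} then follows from Lemma~\ref{lem:priori}. Uniqueness is immediate from \eqref{P-priori}: the difference of two solutions solves \eqref{adjoint_PQK} with $X\equiv0$, hence $\varphi\equiv0$, so it vanishes.

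I expect the only genuine difficulty to be the uniform tail estimate: one must check that the discount weighting $e^{-rt}$, the $\widecheck{(\cdot)}/\widehat{(\cdot)}$ decomposition, and the jump-martingale term $\Phi\bullet\dd M$ all propagate through the shifted-interval It\^o computation exactly as they did in the proof of Lemma~\ref{lem:priori}; everything else is routine once that estimate and the a priori bound \eqref{P-priori} are available.
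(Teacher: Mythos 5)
Your proposal is correct and is essentially the argument the paper has in mind: the paper omits the proof and refers to Peng--Shi \cite{peng2000infinite} and Yu \cite{yu2017a}, whose method is exactly your truncation scheme (finite-horizon approximations with zero terminal data, a uniform tail estimate obtained by re-running the It\^o computation of Lemma~\ref{lem:priori} on $[S,T]$, a Cauchy argument via the homogeneous difference equation, and uniqueness from the a priori estimate \eqref{P-priori}). The only ingredients you add beyond the paper's citation are the ones it implicitly relies on anyway, namely finite-horizon solvability from \cite{NNY2020,NYN2021} and the observation that the constants in Lemma~\ref{lem:priori} do not depend on the time interval.
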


\section{Formal derivation of algebraic Riccati equations}\label{Section Riccati}

In this section, we present a formal and heuristic derivation for a (candidate) feedback
optimal control and algebraic Riccati equations (AREs) based on stochastic maximum principle
and the adjoint equation \eqref{adjoint_PQK}. We will study the solvability of the AREs
and verify the optimality of the candidate in the next two sections, respectively.
In what follows, for a process $\varphi(\cdot)$, we denote
$$
\widehat{\varphi}(t)=E[\varphi(t)|\mathcal{F}_{t}^{\alpha}].
$$
For convenience, let
\begin{equation*}
\begin{aligned}
\widetilde{R}(i)=\;&R(i)+D^{\top}(i)P(i)D(i),\\
S(i)=\;&B^{\top}(i)P(i)+D^{\top}(i)P(i)C(i),\\
\widehat{S}(i)=\;&B^{\top}(i)\widehat{P}(i)+D^{\top}(i)P(i)\widehat{C}(i).
\end{aligned}
\end{equation*}
From stochastic maximum principle, an optimal control $u^{*}(\cdot)$ necessarily satisfies
\begin{equation}\label{open loop}
\begin{aligned}
R(\alpha(t))u^{*}(t)+B^{\top}(\alpha(t))p(t)+D^{\top}(\alpha(t))q(t)=0.
\end{aligned}
\end{equation}
Inspired by the four-step scheme developed by Ma et al. \cite{MPY1994}
(see also Yong \cite{Yong2013}), we set
\begin{equation}\label{p FSS}
\begin{aligned}
p(t)=P(\alpha(t))X(t)+\widehat{P}(\alpha(t))\widehat{X}(t),
\end{aligned}
\end{equation}
where $P(i)$ and $\widehat{P}(i)$, $i\in\mathcal{M}$, are $(n\times n)$ constant
symmetric matrices. It leads to
\begin{equation}\label{p hat}
\begin{aligned}
\widehat{p}(t)=\Big(P(\alpha(t))+\widehat{P}(\alpha(t))\Big)\widehat{X}(t).
\end{aligned}
\end{equation}
Taking conditional expectation $E[\cdot|\mathcal{F}_{t}^{\alpha}]$ on both sides of
the state equation, we have
\begin{equation*}
\begin{aligned}
\dd\widehat{X}(t)=\;&\Big[\Big(A(\alpha(t))+\widehat{A}(\alpha(t))\Big)\widehat{X}(t)
+B(\alpha(t))\widehat{u}(t)\Big ]\dt.
\end{aligned}
\end{equation*}
Applying It\^{o}'s formula for regime switching diffusions (see Yin and Zhang \cite{YinZhang2013})
to (\ref{p FSS}), we obtain
\begin{equation}\label{dp FSS}
\begin{aligned}
\dd p=\;&\bigg(P\Big[AX+\widehat{A}\widehat{X}+Bu\Big]
+\sum_{j\in\mathcal{M}}\lambda_{\alpha(t),j}P(j)X\bigg)\dt
+P\Big[CX+\widehat{C}\widehat{X}+Du\Big]\dw\\
&+\sum_{i,j\in\mathcal{M}}[P(j)-P(i)]X\dd M_{ij}\\
&+\bigg(\widehat{P}\Big[(A+\widehat{A})\widehat{X}+B\widehat{u}\Big]
+\sum_{j\in\mathcal{M}}\lambda_{\alpha(t),j}\widehat{P}(j)\widehat{X}\bigg)\dt
+\sum_{i,j\in\mathcal{M}}[\widehat{P}(j)-\widehat{P}(i)]\widehat{X}\dd M_{ij}.
\end{aligned}
\end{equation}
Comparing the coefficients of $\dw$ parts in (\ref{adjoint_PQK}) and (\ref{dp FSS}), it follows that
\begin{equation}\label{q}
\begin{aligned}
q=P\Big[CX+\widehat{C}\widehat{X}+Du\Big],
\end{aligned}
\end{equation}
and then,
\begin{equation}\label{q hat}
\begin{aligned}
\widehat{q}=P\Big[(C+\widehat{C})\widehat{X}+D\widehat{u}\Big].
\end{aligned}
\end{equation}
Inserting (\ref{p FSS}) and (\ref{q}) into (\ref{open loop}) yields
\begin{equation*}
\begin{aligned}
0=\;&\Big(R+D^{\top}PD\Big)u^{*}+\Big(B^{\top}P+D^{\top}PC\Big)X
+\Big(B^{\top}\widehat{P}+D^{\top}P\widehat{C}\Big)\widehat{X}\\
=\;&\widetilde{R}u^{*}+SX+\widehat{S}\widehat{X},
\end{aligned}
\end{equation*}
which implies that
\begin{equation}\label{optimal control}
\begin{aligned}
u^{*}=-\widetilde{R}^{-1}[SX+\widehat{S}\widehat{X}],
\end{aligned}
\end{equation}
provided $\widetilde{R}$ is invertible. Then,
\begin{equation}\label{optimal control hat}
\begin{aligned}
\widehat{u}^{*}=-\widetilde{R}^{-1}(S+\widehat{S})\widehat{X}.
\end{aligned}
\end{equation}
On one hand, substituting (\ref{p FSS}), (\ref{p hat}), (\ref{q}), (\ref{q hat}),
(\ref{optimal control}), (\ref{optimal control hat}) into (\ref{adjoint_PQK}),
we have
\begin{equation}\label{comparison 1}
\begin{aligned}
\dd p=\;&-\Big[\Big(A^{\top}P+C^{\top}PC-C^{\top}PD\widetilde{R}^{-1}S+Q-rP\Big)X\\
&\qquad+\Big(\widehat{A}^{\top}P+(A+\widehat{A})^{\top}\widehat{P}
+C^{\top}P\widehat{C}+\widehat{C}^{\top}PC+\widehat{C}^{\top}P\widehat{C}\\
&\qquad-C^{\top}PD\widetilde{R}^{-1}\widehat{S}
-\widehat{C}^{\top}PD\widetilde{R}^{-1}(S+\widehat{S})+\widehat{Q}-r\widehat{P}\Big)\widehat{X}\Big]\dt+(\cdots).
\end{aligned}
\end{equation}
On the other hand, substituting (\ref{optimal control}) and \eqref{optimal control hat}
into (\ref{dp FSS}), we have
\begin{equation}\label{comparison 2}
\begin{aligned}
\dd p=\;&\bigg[\bigg(PA-PB\widetilde{R}^{-1}S+\sum_{j\in\mathcal{M}}\lambda_{\alpha(t),j}P(j)\bigg)X\\
&\quad+\bigg(P\widehat{A}+\widehat{P}(A+\widehat{A})
-PB\widetilde{R}^{-1}\widehat{S}-\widehat{P}B\widetilde{R}^{-1}(S+\widehat{S})
+\sum_{j\in\mathcal{M}}\lambda_{\alpha(t),j}\widehat{P}(j)\bigg)\widehat{X}\bigg]\dt+(\cdots).
\end{aligned}
\end{equation}
Note that, for simplicity, only the drift terms are presented in the above two equations.
Finally, by equalizing the coefficients of $X$ and $\widehat{X}$ in (\ref{comparison 1})
and (\ref{comparison 2}), we obtain the following two algebraic Riccati equations:
\begin{equation}\label{Riccati equation 1}
\left\{
\begin{aligned}
rP(i)=\;&P(i)A(i)+A^{\top}(i)P(i)+C^{\top}(i)P(i)C(i)+Q(i)\\
&-S^{\top}(i)\widetilde{R}^{-1}(i)S(i)+\sum_{j\in\mathcal{M}}\lambda_{ij}P(j),\\
\widetilde{R}(i)=\;&R(i)+D^{\top}(i)P(i)D(i)>0,
\end{aligned}
\right.
\end{equation}
and
\begin{equation}\label{Riccati equation 2}
\begin{aligned}
r\widehat{P}(i)=\;&\widehat{P}(i)(A(i)+\widehat{A}(i))
+(A(i)+\widehat{A}(i))^{\top}\widehat{P}(i)
+P(i)\widehat{A}(i)+\widehat{A}^{\top}(i)P(i)\\
&+C^{\top}(i)P(i)\widehat{C}(i)+\widehat{C}^{\top}(i)P(i)C(i)
+\widehat{C}^{\top}(i)P(i)\widehat{C}(i)+\widehat{Q}(i)\\
&-S^{\top}(i)\widetilde{R}^{-1}(i)\widehat{S}(i)
-\widehat{S}^{\top}(i)\widetilde{R}^{-1}(i)S(i)
-\widehat{S}^{\top}(i)\widetilde{R}^{-1}(i)\widehat{S}(i)
+\sum_{j\in\mathcal{M}}\lambda_{ij}\widehat{P}(j).
\end{aligned}
\end{equation}
Note that in the first ARE, there is an additional positive definite constraint
$\widetilde{R}>0$, which contains the solution $P$ and becomes part of the equation,
and the second ARE depends on the first ARE. In particular, these two AREs are
both \emph{self-coupled} via the generator of the Markov chain.

Further, let $\widetilde{P}(i)=P(i)+\widehat{P}(i)$, then we can see that $\widetilde{P}$
satisfies
\begin{equation}\label{Riccati equation 3}
\begin{aligned}
r\widetilde{P}(i)
=\;&\widetilde{P}(i)\widetilde{A}(i)+\widetilde{A}^{\top}(i)\widetilde{P}(i)
+\widetilde{C}^{\top}(i)P(i)\widetilde{C}(i)+\widetilde{Q}(i)-\widetilde{S}^{\top}(i)\widetilde{R}^{-1}(i)\widetilde{S}(i)
+\sum_{j\in\mathcal{M}}\lambda_{ij}\widetilde{P}(j),
\end{aligned}
\end{equation}
where $\widetilde{\Lambda}\doteq\Lambda+\widehat{\Lambda}$ for $\Lambda=A,C,Q,S$.
The structure of equation (\ref{Riccati equation 3}) is similar to that of
equation (\ref{Riccati equation 1}) and much simpler than that of equation (\ref{Riccati equation 2}).
So, in the following analysis, we may use equations (\ref{Riccati equation 1})
and (\ref{Riccati equation 3}) instead of (\ref{Riccati equation 1}) and (\ref{Riccati equation 2}).

\begin{remark}
It is interesting to note that an alternative form for the ansatz \eqref{p FSS}, which is inspired
by the traditional mean-field LQ control problem with no regime switching studied in Yong \cite{Yong2013},
is given by
\begin{equation*}
\begin{aligned}
p(t)=\overline{P}(\alpha(t))(X(t)-E[X(t)|\mathcal{F}_t^{\alpha}])
+\widetilde{P}(\alpha(t))E[X(t)|\mathcal{F}_t^{\alpha}].
\end{aligned}
\end{equation*}
It can be verified that the two forms are equivalent under the transformation
$(\overline{P},\widetilde{P})=(P,P+\widehat{P})$. Consequently, if we adopt the above alternative form,
then $\overline{P}$ coincides with $P$ in ARE \eqref{Riccati equation 1}, while $\widetilde{P}$
corresponds to $P+\widehat{P}$ in ARE \eqref{Riccati equation 3}.
\end{remark}

\section{Solvability of the algebraic Riccati equations}\label{Section Riccati solv}

In this section, we focus on the solvability of the AREs \eqref{Riccati equation 1}
and \eqref{Riccati equation 3}. For convenience, we denote $\mathcal{M}(\mathbb{S}_+^n)$ the set
of tuples $P=(P(1),\ldots,P(M))$, where $P(i)\in\mathbb{S}_+^n$, $i\in\mathcal{M}$. As a preliminary,
we first establish the well-posedness of the following Lyapunov equation \eqref{Lyapunov-1} by
a kind of \emph{Feynman-Kac representation}.
\begin{lemma}\label{lem:Lyapunov}
Let Assumption \eqref{H1} hold. Then, for any $Q\in\mathcal{M}(\mathbb{S}_+^n)$,
the following Lyapunov equation:
\begin{equation}\label{Lyapunov-1}
P(i)A(i)+A^{\top}(i)P(i)+C^{\top}(i)P(i)C(i)+Q(i)+\sum_{j\in\mathcal{M}}\lambda_{ij}P(j)-rP(i)=0
\end{equation}
admits a unique solution $P\in\mathcal{M}(\mathbb{S}_+^n)$.
\end{lemma}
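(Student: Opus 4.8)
The plan is to represent the candidate solution probabilistically and then verify it solves the Lyapunov equation. First I would fix $Q\in\mathcal{M}(\mathbb{S}_+^n)$ and introduce, for each initial state $i\in\mathcal{M}$, an auxiliary linear (uncontrolled) conditional mean-field SDE of the type studied in Section~\ref{Section Equation} — specifically the "$\widecheck{X}$-type" dynamics $\dd Y(t)=A(\alpha(t))Y(t)\dt+C(\alpha(t))Y(t)\dw(t)$ with $Y(0)=x$, $\alpha(0)=i$. By Proposition~\ref{prop:SDE-wellpose} (applied with $u\equiv0$, $\widehat{A}=\widehat{C}=0$, so that only the first inequality of \eqref{H1} is invoked), this SDE has a unique solution in $L^{2,r}_{\mathcal F}(0,\infty;\mathbb{R}^n)$, and $\lim_{T\to\infty}E[e^{-rT}|Y(T)|^2]=0$. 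Then I would define
\begin{equation*}
\langle P(i)x,x\rangle\doteq E\bigg[\int_0^\infty e^{-rt}\big\langle Q(\alpha(t))Y(t),Y(t)\big\rangle\dt\ \Big|\ Y(0)=x,\ \alpha(0)=i\bigg],
\end{equation*}
which is finite by the estimate \eqref{X-wellpose}, and (by linearity of $Y$ in $x$ and the quadratic integrand) is a genuine quadratic form in $x$, giving a symmetric matrix $P(i)$; since $Q(i)\ge0$ the form is nonnegative, so $P(i)\in\mathbb{S}_+^n$.

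Next I would verify that this $P=(P(1),\dots,P(M))$ satisfies \eqref{Lyapunov-1}. The natural route is It\^o's formula for regime-switching diffusions (Yin and Zhang \cite{YinZhang2013}): apply it to $e^{-rt}\langle P(\alpha(t))Y(t),Y(t)\rangle$ on $[0,T]$. Taking expectations kills the $\dw$ and $\dd M$ martingale parts, and one is left with
\begin{equation*}
E\big[e^{-rT}\langle P(\alpha(T))Y(T),Y(T)\rangle\big]-\langle P(i)x,x\rangle
=E\bigg[\int_0^T e^{-rt}\Big\langle\Big(P A+A^\top P+C^\top P C+\textstyle\sum_j\lambda_{\alpha,j}P(j)-rP\Big)Y,Y\Big\rangle\dt\bigg].
\end{equation*}
On the other hand, the defining identity for $P$ gives (via the Markov/flow property, or by differentiating the analogous identity at time $t$) that $\langle P(i)x,x\rangle=E[\int_0^T e^{-rt}\langle Q(\alpha(t))Y(t),Y(t)\rangle\dt]+E[e^{-rT}\langle P(\alpha(T))Y(T),Y(T)\rangle]$. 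Combining the two and letting $T\to\infty$ — using $E[e^{-rT}|Y(T)|^2]\to0$ together with the uniform bound $|P(i)|\le$ const from finiteness of $\mathcal{M}$ — yields $E[\int_0^T e^{-rt}\langle(\cdots)Y,Y\rangle\dt]\to0$ where $(\cdots)$ is the left side of \eqref{Lyapunov-1}. Since this holds for every initial $(x,i)$ and the process $Y$ "fills out" enough directions (here I would argue that for each $i$, choosing $x$ to range over a basis of $\mathbb{R}^n$ and using that the integrand is continuous and starts from $\langle(\cdots(i))x,x\rangle$ at $t=0$), one concludes that the symmetric matrix in \eqref{Lyapunov-1} vanishes for each $i$.

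For uniqueness, suppose $P,P'\in\mathcal{M}(\mathbb{S}_+^n)$ both solve \eqref{Lyapunov-1}; set $\Delta(i)=P(i)-P'(i)$. Then $\Delta$ satisfies the homogeneous Lyapunov equation (with $Q=0$), and running the same It\^o computation on $e^{-rt}\langle\Delta(\alpha(t))Y(t),Y(t)\rangle$ gives $\langle\Delta(i)x,x\rangle=E[e^{-rT}\langle\Delta(\alpha(T))Y(T),Y(T)\rangle]$ for all $T$; letting $T\to\infty$ and using the decay $E[e^{-rT}|Y(T)|^2]\to0$ forces $\langle\Delta(i)x,x\rangle=0$ for all $x$, hence $\Delta\equiv0$.

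I expect the main obstacle to be making the "Feynman–Kac representation" step fully rigorous: one must justify that the conditional expectation defines a bona fide quadratic form (needs the linear dependence of $Y$ on $x$ together with measurability), and more importantly one must be careful about \emph{how} the defining formula for $P$ interacts with the dynamics — i.e., establishing the intermediate identity $\langle P(i)x,x\rangle = E[\int_0^T e^{-rt}\langle QY,Y\rangle\dt]+E[e^{-rT}\langle P(\alpha(T))Y(T),Y(T)\rangle]$, which is really a (conditional) dynamic-programming / tower-property argument over the random horizon with switching. Once that identity is in hand, matching it against the It\^o expansion is routine linear algebra. A secondary technical point is confirming that all the expectations and infinite integrals converge, but this is exactly what Proposition~\ref{prop:SDE-wellpose} and Assumption~\eqref{H1} are designed to deliver.
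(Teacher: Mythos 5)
Your proposal is correct and takes essentially the same route as the paper: both construct $P(i)$ through the Feynman--Kac representation $P(i)=E\big[\int_0^\infty e^{-rt}\Phi_i^{\top}(t)Q(\alpha(t))\Phi_i(t)\dt\,\big|\,\alpha(0)=i\big]$ built from the homogeneous switching SDE (your $Y(t)$ is exactly $\Phi_i(t)x$), with the first inequality of \eqref{H1} supplying the discounted square-integrability and the decay $E[e^{-rT}|Y(T)|^2]\to0$. The only difference is that the paper outsources the verification and uniqueness steps to Zhang et al.\ \cite[Proposition 3.2]{zhang2021}, whereas you carry out the It\^{o}/tower-property computation and the homogeneous-equation uniqueness argument explicitly.
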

\begin{proof}
Consider the following matrix-valued equation on $[0,\infty)$:
\begin{equation*}
\left\{
\begin{aligned}
\dd\Phi_i(t)=\;&A(\alpha(t))\Phi_i(t)\dt+C(\alpha(t))\Phi_i(t)\dw(t),\\
\Phi_i(0)=\;&\mathbf{I}_n,\quad\alpha(0)=i.
\end{aligned}
\right.
\end{equation*}
Similar to Proposition \ref{prop:SDE-wellpose}, the above equation admits a unique solution
$\Phi_i(\cdot)\in L_{\mathcal{F}}^{2,r}(0,\infty;\mathbb{R}^{n\times n})$ under Assumption \eqref{H1}.
Then, as the method in Zhang et al. \cite[Proposition 3.2]{zhang2021}, the (unique) solution to
the Lyapunov equation \eqref{Lyapunov-1} admits the following representation:
$$
P(i)=\mathbb{E}\bigg[\int_0^{\infty}e^{-rt}\Phi_i^{\top}(t)Q(\alpha(t))\Phi_i(t)\dt
\;\bigg|\;\alpha(0)=i\bigg].
$$
Moreover, $Q\in\mathcal{M}(\mathbb{S}_+^n)$ suggests that $P\in\mathcal{M}(\mathbb{S}_+^n)$.
\end{proof}
\begin{theorem}\label{thm:Riccati1}
Let Assumptions \eqref{H1}-\eqref{H2} hold. Then, the algebraic Riccati equation \eqref{Riccati equation 1}
admits a solution $P\in\mathcal{M}(\mathbb{S}_+^n)$.
\end{theorem}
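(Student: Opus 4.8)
The plan is to solve ARE~\eqref{Riccati equation 1} by an iterative (Newton–Kleinman type) scheme in which each step is a Lyapunov equation solved via Lemma~\ref{lem:Lyapunov}, and then pass to the limit using monotonicity. Since $\widetilde{R}(i)=R(i)+D^\top(i)P(i)D(i)$ involves the unknown $P$, I would linearize around a control gain rather than around $P$ itself. First I would set $P^{(0)}=0\in\mathcal{M}(\mathbb{S}^n_+)$; since $Q(i)\ge0$ and $R(i)>0$, the corresponding $\widetilde{R}^{(0)}(i)=R(i)>0$ is invertible. Given $P^{(\ell)}\in\mathcal{M}(\mathbb{S}^n_+)$ with $\widetilde{R}^{(\ell)}(i):=R(i)+D^\top(i)P^{(\ell)}(i)D(i)>0$, define the gain $K^{(\ell)}(i)=-(\widetilde{R}^{(\ell)}(i))^{-1}S^{(\ell)}(i)$ where $S^{(\ell)}(i)=B^\top(i)P^{(\ell)}(i)+D^\top(i)P^{(\ell)}(i)C(i)$, and let $P^{(\ell+1)}$ be the solution of the Lyapunov equation
\begin{equation*}
\begin{aligned}
&(A(i)+B(i)K^{(\ell)}(i))^\top P^{(\ell+1)}(i)+P^{(\ell+1)}(i)(A(i)+B(i)K^{(\ell)}(i))\\
&\quad+(C(i)+D(i)K^{(\ell)}(i))^\top P^{(\ell+1)}(i)(C(i)+D(i)K^{(\ell)}(i))
+Q(i)+K^{(\ell)}(i)^\top R(i)K^{(\ell)}(i)\\
&\quad+\sum_{j\in\mathcal{M}}\lambda_{ij}P^{(\ell+1)}(j)-rP^{(\ell+1)}(i)=0,
\end{aligned}
\end{equation*}
which is of the form covered by Lemma~\ref{lem:Lyapunov} with data $Q(i)+K^{(\ell)}(i)^\top R(i)K^{(\ell)}(i)\ge0$ and drift/diffusion matrices $A(i)+B(i)K^{(\ell)}(i)$, $C(i)+D(i)K^{(\ell)}(i)$ — provided these still satisfy Assumption~\eqref{H1}. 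Hence $P^{(\ell+1)}\in\mathcal{M}(\mathbb{S}^n_+)$, and in particular $\widetilde{R}^{(\ell+1)}(i)\ge R(i)>0$, so the recursion is well defined.

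The key analytic facts to establish are (i) monotonicity $P^{(\ell+1)}(i)\le P^{(\ell)}(i)$ for $\ell\ge1$, or more robustly the boundedness of the sequence, and (ii) a uniform upper bound. For (ii) I would compare with any fixed stabilizing gain: the probabilistic representation in Lemma~\ref{lem:Lyapunov} shows $P^{(\ell+1)}(i)=\mathbb{E}\big[\int_0^\infty e^{-rt}\Phi^{(\ell),\top}_i(t)(Q(\alpha(t))+K^{(\ell)\top}RK^{(\ell)})(\alpha(t))\Phi^{(\ell)}_i(t)\,\dt\,\big|\,\alpha(0)=i\big]$, where $\Phi^{(\ell)}_i$ is the closed-loop fundamental solution; this equals the cost of the linear feedback $u=K^{(\ell)}X$, and one shows it is dominated by the cost of a fixed admissible feedback, giving a bound independent of $\ell$. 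For (i), I would use the standard Newton–Kleinman algebra: subtracting consecutive Lyapunov equations and completing the square in the gain difference $K^{(\ell)}-K^{(\ell-1)}$ produces a Lyapunov equation for $P^{(\ell)}-P^{(\ell+1)}$ with a manifestly positive semidefinite source term, so again by the representation $P^{(\ell)}-P^{(\ell+1)}\ge0$. Together with $P^{(\ell)}\ge0$, the sequence $\{P^{(\ell)}(i)\}$ is nonincreasing and bounded below, hence convergent to some $P\in\mathcal{M}(\mathbb{S}^n_+)$; passing to the limit in the Lyapunov recursion (all terms are continuous in $P^{(\ell)}$, and $\widetilde{R}^{(\ell)}(i)\to\widetilde{R}(i)\ge R(i)>0$ so the inverse is continuous at the limit) yields exactly ARE~\eqref{Riccati equation 1} together with $\widetilde{R}(i)=R(i)+D^\top(i)P(i)D(i)>0$.

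The main obstacle I anticipate is the preservation of Assumption~\eqref{H1} along the iteration: Lemma~\ref{lem:Lyapunov} requires, for the closed-loop coefficients, that $r\mathbf{I}_n-[(A(i)+B(i)K^{(\ell)}(i))+(A(i)+B(i)K^{(\ell)}(i))^\top+(C(i)+D(i)K^{(\ell)}(i))^\top(C(i)+D(i)K^{(\ell)}(i))]>0$, and this is not automatic from \eqref{H1} for the open-loop data alone. The resolution I would pursue is that the \emph{discounted} Lyapunov equation is in fact solvable for \emph{every} set of coefficients with the correct sign of the source term provided $r$ is large enough, and more to the point, a direct energy estimate (applying It\^o's formula to $e^{-rt}|\Phi^{(\ell)}_i(t)|^2$ as in the proof of Proposition~\ref{prop:SDE-wellpose}) shows the closed-loop fundamental solution lies in $L^{2,r}_{\mathcal F}$ as soon as the closed-loop cost is finite — which it is, being bounded by the fixed-feedback cost. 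Thus I would phrase Lemma~\ref{lem:Lyapunov}'s use through its Feynman–Kac representation and the a priori finiteness of the discounted cost, rather than through a pointwise matrix inequality on the closed-loop data; the monotonicity argument then keeps every $P^{(\ell)}$ inside the region where this finiteness holds. The remaining steps — the completion-of-squares identity for the Newton step and the limit passage — are routine linear algebra.
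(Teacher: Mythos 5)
Your proposal is essentially the paper's own proof: the same quasi-linearization scheme in which each iterate solves a closed-loop Lyapunov equation via Lemma~\ref{lem:Lyapunov} (your $P^{(0)}=0$, $P^{(1)}$ correspond to the paper's $P_{-1}=0$, $P_0$), the same Newton-step completion of squares in the gain difference $\Lambda_k=\Theta_{k-1}-\Theta_k$ producing a Lyapunov equation for $P_k-P_{k+1}$ with source $\Lambda_k^{\top}(R+D^{\top}P_kD)\Lambda_k\ge0$, and the same monotone limit passage using $\widetilde{R}_k\ge R>0$. The obstacle you single out --- that Lemma~\ref{lem:Lyapunov} is stated under Assumption~\eqref{H1} for the \emph{open-loop} data, whereas the iteration needs solvability of the Lyapunov equation with the closed-loop coefficients $A+B\Theta_k$, $C+D\Theta_k$, for which the analogous inequality is not automatic --- is genuine, and the paper's proof does not address it either: it simply invokes Lemma~\ref{lem:Lyapunov} for equation~\eqref{Lya-k}. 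Your proposed repair (work through the Feynman--Kac representation and bound the closed-loop discounted cost by that of a fixed admissible feedback) is the right idea, but as sketched it is not yet closed: finiteness of the representation requires the closed-loop fundamental solution to lie in $L^{2,r}_{\mathcal{F}}$, i.e.\ $r$-discounted mean-square stability of the closed-loop system, and deducing that from $P_k\ge0$ and a one-sided Lyapunov inequality with merely positive semidefinite source would need an additional detectability-type argument; so this step deserves a full proof in either version.
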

\begin{proof}
First, consider the following Lyapunov equation:
\begin{equation}\label{Lya-1}
P_0(i)A(i)+A^{\top}(i)P_0(i)+C^{\top}(i)P_0(i)C(i)+Q(i)
+\sum_{j\in\mathcal{M}}\lambda_{ij}P_0(j)-rP_0(i)=0.
\end{equation}
From Lemma \ref{lem:Lyapunov}, the equation \eqref{Lya-1} admits a unique solution
$P_0\in\mathcal{M}(\mathbb{S}_+^n)$. Then, let $P_{-1}=0$ (it is defined since we
shall use $\Theta_{-1}$, $A_{-1}$ etc below), and for $k=-1,0,1,\ldots$,
we define inductively that
\begin{equation}\label{theta-def}
\left\{
\begin{aligned}
\Theta_k=\;&-(R+D^{\top}P_kD)^{-1}(B^{\top}P_k+D^{\top}P_kC),\\
A_k=\;&A+B\Theta_k,\quad C_k=C+D\Theta_k,\quad Q_k=Q+\Theta_k^{\top}R\Theta_k.
\end{aligned}
\right.
\end{equation}
By Lemma \ref{lem:Lyapunov}, there exists a unique $P_{k+1}\in\mathcal{M}(\mathbb{S}_+^n)$ satisfying
the following Lyapunov equation:
\begin{equation}\label{Lya-k}
\begin{aligned}
P_{k+1}(i)A_k(i)+A_k^{\top}(i)P_{k+1}(i)+C_k^{\top}(i)P_{k+1}(i)C_k(i)+Q_k(i)
+\sum_{j\in \mathcal{M}}{\lambda _{ij}P_{k+1}(j)}-rP_{k+1}(i)=0.
\end{aligned}
\end{equation}
We claim that the sequence $P_{k}$ decreases to a limit $P$. To show this, we denote
$$
\Delta_k(i)=P_k(i)-P_{k+1}(i),\quad \Lambda_k(i)=\Theta_{k-1}(i)-\Theta_k(i).
$$
Then, for $k\ge 0$, we have
$$
\begin{aligned}
P_k(i)A_{k-1}(i)+A_{k-1}^{\top}(i)P_k(i)&+C_{k-1}^{\top}(i)P_k(i)C_{k-1}(i)+Q_{k-1}(i)
+\sum_{j\in \mathcal{M}}{\lambda_{ij}\Delta_k(j)}-r\Delta_k(i)\\
&-P_{k+1}(i)A_k(i)-A_k^{\top}(i)P_{k+1}(i)-C_k^{\top}(i)P_{k+1}(i)C_k(i)-Q_k(i)=0,
\end{aligned}
$$
i.e.,
\begin{equation}\label{Delta-k}
\begin{aligned}
&\Delta_k(i)A_k(i)+A_k^{\top}(i)\Delta_k(i)+C_k^{\top}(i)\Delta_k(i)C_k(i)
+\sum_{j\in \mathcal{M}}{\lambda_{ij}\Delta_k(j)}-r\Delta_k(i)\\
&+P_k(i)(A_{k-1}(i)-A_k(i))+(A_{k-1}(i)-A_k(i))^{\top}P_{k}(i)\\
&+C_{k-1}^{\top}(i)P_k(i)C_{k-1}(i)-C_k^{\top}(i)P_k(i)C_k(i)+Q_{k-1}(i)-Q_k(i)=0.
\end{aligned}
\end{equation}
Using the notations given by \eqref{theta-def}, we have
\begin{equation}\label{relationship-k}
\begin{cases}
A_{k-1}-A_k=B\Lambda_k,\quad C_{k-1}-C_k=D\Lambda_k,\\
C_{k-1}^{\top}P_kC_{k-1}-C_{k}^{\top}P_kC_k
=C_{k}^{\top}P_kD\Lambda_k+\Lambda _{k}^{\top}D^{\top}P_kC_k+\Lambda_{k}^{\top}D^{\top}P_kD\Lambda_k,\\
Q_{k-1}-Q_k=\Theta_{k}^{\top}R\Lambda_k+\Lambda_{k}^{\top}R\Theta_k+\Lambda_{k}^{\top}R\Lambda_k.
\end{cases}
\end{equation}
By \eqref{theta-def}, one also has
$$
B^{\top}P_k+D^{\top}P_kC_k+R\Theta_k=B^{\top}P_k+D^{\top}P_kC+(R+D^{\top}P_kD)\Theta_k=0,
$$
so substituting \eqref{relationship-k} into \eqref{Delta-k} yields
$$
\begin{aligned}
\Delta_k(i)A_k(i)+A_k^{\top}(i)\Delta_k(i)+C_k^{\top}(i)\Delta_k(i)C_k(i)
+\sum_{j\in \mathcal{M}}{\lambda_{ij}\Delta_k(j)}-r\Delta_k(i)&\\
+\Lambda_k^{\top}(i)[R(i)+D^{\top}(i)P_k(i)D(i)]\Lambda_k(i)&=0.
\end{aligned}
$$
From Lemma \ref{lem:Lyapunov} and $\Lambda_k^{\top}[R+D^{\top}P_kD]\Lambda_k\ge 0$,
the above equation admits a unique solution $\Delta_k\in\mathcal{M}(\mathbb{S}_+^n)$,
which indicates that
$$
P_0\geq P_1\geq\cdots\geq P_k\geq P_{k+1}\geq\cdots\geq0.
$$
By the monotone convergence theorem, $P_{k}$ should converge to a limit $P\in\mathcal{M}(\mathbb{S}_+^n)$.
Moreover,
$$
R+D^{\top}PD=\lim_{k\rightarrow\infty}[R+D^{\top}P_kD]\ge R>0,
$$
and as $k\rightarrow\infty$,
\begin{equation*}
\left\{
\begin{aligned}
\Theta_k\rightarrow&-(R+D^{\top}PD)^{-1}(B^{\top}P+D^{\top}PC)\doteq\Theta,\\
A_k\rightarrow&A+B\Theta,\quad C_k\rightarrow C+D\Theta,\quad Q_k\rightarrow Q+\Theta^{\top}R\Theta.
\end{aligned}
\right.
\end{equation*}
Sending $k\rightarrow\infty$ in \eqref{Lya-k}, it follows that $P$ satisfies the following equation:
$$
\begin{aligned}
P(A+B\Theta)+(A+B\Theta)^{\top}P+\sum_{j\in \mathcal{M}}{\lambda_{ij}P(j)}-rP
+(C+D\Theta)^{\top}P(C+D\Theta)+Q+\Theta^{\top}R\Theta=0,
\end{aligned}
$$
which coincides with the ARE (\ref{Riccati equation 1}).
\end{proof}
\begin{theorem}
Let Assumptions \eqref{H1}-\eqref{H2} hold. Then, the algebraic Riccati equation \eqref{Riccati equation 3}
admits a solution $\widetilde{P}\in\mathcal{M}(\mathbb{S}_+^n)$.
\end{theorem}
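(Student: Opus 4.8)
The plan is to follow the Lyapunov-equation iteration used in the proof of Theorem~\ref{thm:Riccati1}, after first recasting \eqref{Riccati equation 3} into the canonical form of a Markov-coupled algebraic Riccati equation with \emph{no diffusion term} and a \emph{fixed} positive definite control weight. Since $P\in\mathcal{M}(\mathbb{S}_+^n)$ is the solution of \eqref{Riccati equation 1} produced by Theorem~\ref{thm:Riccati1}, the matrix $\widetilde R(i)=R(i)+D^{\top}(i)P(i)D(i)$ is already known, positive definite, and free of the unknown $\widetilde P$. Writing $\widetilde S(i)=B^{\top}(i)\widetilde P(i)+D^{\top}(i)P(i)\widetilde C(i)$, I would expand $-\widetilde S^{\top}\widetilde R^{-1}\widetilde S$ and collect the terms that are linear in $\widetilde P$ together with those independent of $\widetilde P$; this rewrites \eqref{Riccati equation 3} as
\[
r\widetilde P(i)=\widetilde P(i)\mathcal{A}(i)+\mathcal{A}^{\top}(i)\widetilde P(i)-\widetilde P(i)B(i)\widetilde R^{-1}(i)B^{\top}(i)\widetilde P(i)+\mathcal{Q}(i)+\sum_{j\in\mathcal{M}}\lambda_{ij}\widetilde P(j),
\]
with $\mathcal{A}(i):=\widetilde A(i)-B(i)\widetilde R^{-1}(i)D^{\top}(i)P(i)\widetilde C(i)$ and $\mathcal{Q}(i):=\widetilde Q(i)+\widetilde C^{\top}(i)\big[P(i)-P(i)D(i)\widetilde R^{-1}(i)D^{\top}(i)P(i)\big]\widetilde C(i)$. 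The bracket is the Schur complement, relative to the positive definite block $\widetilde R(i)$, of the matrix $\left(\begin{smallmatrix}P(i)&P(i)D(i)\\ D^{\top}(i)P(i)&\widetilde R(i)\end{smallmatrix}\right)=\left(\begin{smallmatrix}\mathbf{I}_n\\ D^{\top}(i)\end{smallmatrix}\right)P(i)\left(\begin{smallmatrix}\mathbf{I}_n&D(i)\end{smallmatrix}\right)+\operatorname{diag}(0,R(i))\ge0$, so $\mathcal{Q}\in\mathcal{M}(\mathbb{S}_+^n)$ by \eqref{H2}. The reformulated equation has precisely the structure of \eqref{Riccati equation 1} with the diffusion coefficient replaced by zero and with a fixed positive definite control weight, so the self-referential constraint ``$\widetilde R>0$'' is no longer present.

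The structural point that drives the whole argument -- replacing the role played by the first line of \eqref{H1} in Theorem~\ref{thm:Riccati1} -- is that the pair $(\mathcal{A},B)$ is ``$r$-stabilizable'' by the \emph{constant} feedback $\Xi^{*}(i):=\widetilde R^{-1}(i)D^{\top}(i)P(i)\widetilde C(i)$: indeed $\mathcal{A}(i)+B(i)\Xi^{*}(i)=\widetilde A(i)$, and the second line of \eqref{H1} states exactly that $r\mathbf{I}_n-[\widetilde A(i)+\widetilde A^{\top}(i)]>0$ for every $i$. The Feynman--Kac argument of Lemma~\ref{lem:Lyapunov}, applied with zero diffusion and with $A$ replaced by $\widetilde A$ (legitimate because the only property of the coefficients used in its proof is the uniform dissipativity bound, which for $(\widetilde A,0)$ is guaranteed by the second line of \eqref{H1}), then yields a unique $\widetilde P_0\in\mathcal{M}(\mathbb{S}_+^n)$ solving the Lyapunov equation with drift $\widetilde A$ and source $\mathcal{Q}+\Xi^{*\top}\widetilde R\Xi^{*}$. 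Substituting $\widetilde P_0$ into the Riccati left-hand side and completing the square in $\Xi^{*}$ shows that the residual equals $-(\widetilde R^{-1}B^{\top}\widetilde P_0+\Xi^{*})^{\top}\widetilde R(\widetilde R^{-1}B^{\top}\widetilde P_0+\Xi^{*})\le0$, i.e., $\widetilde P_0$ is a supersolution of the reformulated ARE.

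Starting from this stabilizing feedback, I would run the Newton--Kleinman iteration exactly as in the proof of Theorem~\ref{thm:Riccati1}: put $\Xi_0:=\Xi^{*}$ with $\widetilde P_0$ as above, and inductively $\Xi_{k+1}:=-\widetilde R^{-1}B^{\top}\widetilde P_k$, $\mathcal{A}_{k+1}:=\mathcal{A}+B\Xi_{k+1}$, $\mathcal{Q}_{k+1}:=\mathcal{Q}+\Xi_{k+1}^{\top}\widetilde R\Xi_{k+1}$, with $\widetilde P_{k+1}\in\mathcal{M}(\mathbb{S}_+^n)$ the solution of the Lyapunov equation with drift $\mathcal{A}_{k+1}$ and source $\mathcal{Q}_{k+1}$. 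The computations behind \eqref{Delta-k}--\eqref{relationship-k} carry over verbatim, with $C\equiv0$ and $R+D^{\top}P_kD$ replaced by the constant $\widetilde R$: writing $\Delta_k=\widetilde P_k-\widetilde P_{k+1}$, $\Lambda_k=\Xi_k-\Xi_{k+1}$ and using $B^{\top}\widetilde P_k+\widetilde R\Xi_{k+1}=0$, one finds that $\Delta_k$ solves a Lyapunov equation with drift $\mathcal{A}_{k+1}$ and source $\Lambda_k^{\top}\widetilde R\Lambda_k\ge0$, hence $\Delta_k\ge0$ by Lemma~\ref{lem:Lyapunov}; therefore $\widetilde P_0\ge\widetilde P_1\ge\cdots\ge0$, and the monotone limit $\widetilde P\in\mathcal{M}(\mathbb{S}_+^n)$ solves the reformulated ARE, hence \eqref{Riccati equation 3}. (One could instead skip the iteration and pass to the limit $T\to\infty$ in the finite-horizon Riccati ODEs of the reformulated equation, using $\Xi^{*}$ for a $T$-uniform upper bound.)

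The step needing care, just as in Theorem~\ref{thm:Riccati1}, is the solvability of each Lyapunov equation along the iteration, i.e., the requirement that the successive closed-loop drifts $\mathcal{A}_k$ stay ``$r$-stable'' so that Lemma~\ref{lem:Lyapunov} applies. This is the inductive heart of the scheme: $\widetilde P_{k-1}$, being the Lyapunov solution associated with $\mathcal{A}_{k-1}$, satisfies $\widetilde P_{k-1}\mathcal{A}_k+\mathcal{A}_k^{\top}\widetilde P_{k-1}+\sum_{j\in\mathcal{M}}\lambda_{ij}\widetilde P_{k-1}(j)-r\widetilde P_{k-1}=-\mathcal{Q}_k-\Lambda_{k-1}^{\top}\widetilde R\Lambda_{k-1}\le0$, so $\widetilde P_{k-1}$ serves as a Lyapunov certificate that propagates $r$-stability from the base case $\mathcal{A}_0=\widetilde A$ (which is $r$-stable by \eqref{H1}) along the entire iteration; here $\widetilde R>0$ and the monotonicity $\widetilde P_k\le\widetilde P_{k-1}$ are precisely what make the propagation work. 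Everything else -- uniqueness of the Lyapunov solutions, monotone convergence, and identification of the limit -- is routine and parallels the proof of Theorem~\ref{thm:Riccati1} line by line.
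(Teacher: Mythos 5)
Your proposal is correct and follows essentially the same route as the paper: given the solution $P$ of \eqref{Riccati equation 1}, the second ARE is reduced to a Riccati-type equation with the fixed positive definite weight $\widetilde R=R+D^{\top}PD$ and solved by the same monotone Lyapunov/quasi-linearization iteration as in Theorem \ref{thm:Riccati1}. The only differences are presentational --- you absorb the cross term $\overline C=D^{\top}P\widetilde C$ into the drift (checking nonnegativity of the resulting source by a Schur complement) and you spell out why each closed-loop drift stays ``$r$-stable'' so that Lemma \ref{lem:Lyapunov} applies at every step, whereas the paper keeps the cross term inside the feedback gains $\widetilde\Theta_k=-\widetilde R^{-1}(B^{\top}\widetilde P_k+\overline C)$ and leaves that stability propagation implicit.
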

\begin{proof}
Given a solution $P\in\mathcal{M}(\mathbb{S}_+^n)$ to the ARE (\ref{Riccati equation 1}), we denote
$\overline{C}=D^{\top}P\widetilde{C}$, $\overline{Q}=\widetilde{C}^{\top}P\widetilde{C}+\widetilde{Q}\ge 0$,
$\widetilde{S}=B^{\top}\widetilde{P}+\overline{C}$. Then, the ARE (\ref{Riccati equation 3}) becomes
$$
\widetilde{P}\widetilde{A}+\widetilde{A}^{\top}\widetilde{P}+\overline{Q}
+\sum_{j\in \mathcal{M}}{\lambda_{ij}\widetilde{P}(j)}
-\widetilde{S}^{\top}\widetilde{R}^{-1}\widetilde{S}-r\widetilde{P}=0.
$$
We first consider the following equation:
$$
\widetilde{P}_0\widetilde{A}+\widetilde{A}^{\top}\widetilde{P}_0+\overline{Q}
+\sum_{j\in \mathcal{M}}\lambda_{ij}\widetilde{P}_0(j)-r\widetilde{P}_0=0,
$$
which by Lemma \ref{lem:Lyapunov} admits a unique solution $\widetilde{P}_0\in\mathcal{M}(\mathbb{S}_+^n)$
under Assumptions \eqref{H1}-\eqref{H2}. Next, for $k=0,1,\ldots$, let $\widetilde{P}_{k+1}$
be the solution to the following equation:
$$
\widetilde{P}_{k+1}\widetilde{A}_k+\widetilde{A}_k^{\top}\widetilde{P}_{k+1}
+\overline{Q}_k+\sum_{j\in\mathcal{M}}{\lambda_{ij}\widetilde{P}_{k+1}(j)}-r\widetilde{P}_{k+1}=0,
$$
where we denote
$\widetilde{\Theta}_k=-\widetilde{R}^{-1}(B^{\top}\widetilde{P}_k+\overline{C})$,
$\widetilde{A}_k=\widetilde{A}+B\widetilde{\Theta}_k$,
$\overline{Q}_k=\overline{Q}+\widetilde{\Theta}_{k}^{\top}\widetilde{R}\widetilde{\Theta}_k$.
By the argument analogous to Theorem \ref{thm:Riccati1}, the sequence $\widetilde{P}_{k}$
converges to a limit $\widetilde{P}$, which solves the ARE (\ref{Riccati equation 3}).
\end{proof}

\section{Verification for optimality}\label{Section optimality}

In the following, we will verify the optimality of the candidate (\ref{optimal control})
and compute the minimal cost based on the completion of squares method.
\begin{theorem}\label{case 1 theorem}
Let Assumptions \eqref{H1}-\eqref{H2} hold. Suppose that the algebraic Riccati equations
\eqref{Riccati equation 1} and \eqref{Riccati equation 3}
admit solutions $P(i)$ and $\widetilde{P}(i)$, $i\in\mathcal{M}$, respectively. Then, $u^{*}(\cdot)$
defined by \eqref{optimal control} is a feedback optimal control. Furthermore, the minimal cost
is given by
\begin{equation*}
\begin{aligned}
J(x,i;u^{*}(\cdot))=\frac{1}{2}\langle \widetilde{P}(i)x,x\rangle.
\end{aligned}
\end{equation*}
As a consequence, the solutions to the ARE \eqref{Riccati equation 1} and \eqref{Riccati equation 3} are unique.
\end{theorem}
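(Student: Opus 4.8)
The plan is to run a completion-of-squares argument tailored to the conditional mean-field structure, working throughout with the decompositions $X=\widecheck X+\widehat X$, $u=\widecheck u+\widehat u$ of Section~\ref{Section Equation} and with the solutions $P$, $\widetilde P$ of \eqref{Riccati equation 1}, \eqref{Riccati equation 3}. First one checks that the candidate is admissible. Plugging \eqref{optimal control} into \eqref{state equation} gives a linear closed-loop equation which, by the finite-horizon theory quoted in Section~\ref{Section Equation}, has a unique solution $X^{*}(\cdot)$ on every $[0,T]$; applying It\^o's formula to $e^{-rt}\big(\langle P(\alpha(t))\widecheck{X}^{*}(t),\widecheck{X}^{*}(t)\rangle+\langle \widetilde P(\alpha(t))\widehat{X}^{*}(t),\widehat{X}^{*}(t)\rangle\big)$ over $[0,T]$, using the AREs \eqref{Riccati equation 1}, \eqref{Riccati equation 3} to eliminate the drift and $P,\widetilde P\ge0$, yields $E\int_0^T e^{-rt}\langle R(\alpha)u^{*},u^{*}\rangle\dt\le\langle \widetilde P(i)x,x\rangle$ uniformly in $T$; since $R>0$ uniformly in $\mathcal M$, this gives $u^{*}(\cdot)\in L^{2,r}_{\mathcal F}(0,\infty;\mathbb R^k)$ and then, by Proposition~\ref{prop:SDE-wellpose}, $X^{*}(\cdot)\in L^{2,r}_{\mathcal F}(0,\infty;\mathbb R^n)$.

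Now fix an arbitrary admissible $u(\cdot)$ and its state $X(\cdot)$. Since $E[\widecheck X(t)\,|\,\mathcal F^\alpha_t]=E[\widecheck u(t)\,|\,\mathcal F^\alpha_t]=0$ while $\widehat X(t),\widehat u(t),\alpha(t)$ are $\mathcal F^\alpha_t$-measurable, all cross terms drop under the expectation, so the cost \eqref{cost functional} splits as
\begin{equation*}
2J(x,i;u(\cdot))=E\bigg[\int_0^\infty e^{-rt}\Big(\langle Q(\alpha)\widecheck X,\widecheck X\rangle+\langle R(\alpha)\widecheck u,\widecheck u\rangle+\langle \widetilde Q(\alpha)\widehat X,\widehat X\rangle+\langle R(\alpha)\widehat u,\widehat u\rangle\Big)\dt\bigg],
\end{equation*}
with $\widetilde Q\doteq Q+\widehat Q$. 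I then apply It\^o's formula for regime-switching diffusions to $e^{-rt}\langle P(\alpha(t))\widecheck X(t),\widecheck X(t)\rangle$ and to $e^{-rt}\langle \widetilde P(\alpha(t))\widehat X(t),\widehat X(t)\rangle$ over $[0,T]$, using the decoupled dynamics of $\widecheck X$ and $\widehat X$ from Section~\ref{Section Equation}. Taking expectations kills the $\dd W$- and $\dd M$-martingale parts, brings in the generator terms $\sum_j\lambda_{ij}P(j)$, $\sum_j\lambda_{ij}\widetilde P(j)$, and substituting \eqref{Riccati equation 1}, \eqref{Riccati equation 3} turns the state-quadratic coefficients into $S^\top\widetilde R^{-1}S-Q$ and $\widetilde S^\top\widetilde R^{-1}\widetilde S-\widetilde C^\top P\widetilde C-\widetilde Q$, where $\widetilde S\doteq S+\widehat S=B^\top\widetilde P+D^\top P\widetilde C$. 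Letting $T\to\infty$, invoking $\lim_{T\to\infty}E[e^{-rT}|X(T)|^2]=0$ from \eqref{Xlim} (so both $E[e^{-rT}|\widecheck X(T)|^2]$ and $E[e^{-rT}|\widehat X(T)|^2]\to0$, with $\widecheck X(0)=0$, $\widehat X(0)=x$), and again discarding the products of a fluctuation factor with a conditional-mean factor (zero in expectation), the two It\^o identities together with the split cost collapse, after completing the square, to
\begin{equation*}
2J(x,i;u(\cdot))=\langle \widetilde P(i)x,x\rangle+E\bigg[\int_0^\infty e^{-rt}\Big(\big\langle \widetilde R(\alpha)(\widecheck u-\widecheck u^{*}),\widecheck u-\widecheck u^{*}\big\rangle+\big\langle \widetilde R(\alpha)(\widehat u-\widehat u^{*}),\widehat u-\widehat u^{*}\big\rangle\Big)\dt\bigg],
\end{equation*}
in which $\widecheck u^{*}=-\widetilde R^{-1}S\widecheck X$ and $\widehat u^{*}=-\widetilde R^{-1}\widetilde S\widehat X$ are precisely the fluctuation and conditional-mean parts of the feedback control \eqref{optimal control}.

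Because the first ARE forces $\widetilde R(i)=R(i)+D^\top(i)P(i)D(i)\ge R(i)>0$, the integral term is nonnegative and vanishes exactly when $\widecheck u=\widecheck u^{*}$ and $\widehat u=\widehat u^{*}$ a.e., i.e. when $u(\cdot)=u^{*}(\cdot)$; hence $u^{*}(\cdot)$ minimizes $J(x,i;\cdot)$ and $J(x,i;u^{*}(\cdot))=\tfrac12\langle \widetilde P(i)x,x\rangle$. For the uniqueness claim, the displayed identity shows $\langle \widetilde P(i)x,x\rangle=2\inf_{u}J(x,i;u)$ for all $(x,i)$, so by polarization $\widetilde P(i)$ is uniquely determined, hence so is the solution $\widetilde P$ of \eqref{Riccati equation 3} once $P$ is fixed. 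To pin down $P$, apply the whole argument to the degenerate case $\widehat A(i)=\widehat C(i)=\widehat Q(i)=0$: then $\widehat P=0$ solves \eqref{Riccati equation 2}, so $\widetilde P=P$ and \eqref{Riccati equation 3} reduces exactly to \eqref{Riccati equation 1}, and the identity reads $\langle P(i)x,x\rangle=2\inf_u J_0(x,i;u)$ for the associated standard regime-switching LQ problem $J_0$; since the coefficients of \eqref{Riccati equation 1} involve none of $\widehat A,\widehat C,\widehat Q$, this forces its solution $P$ to be unique in general, and then $\widehat P=\widetilde P-P$ (equivalently, the solution of \eqref{Riccati equation 2}) is unique too.

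I expect the main obstacle to be the It\^o bookkeeping behind the second displayed identity: one must check carefully that (i) the terms coupling $(\widecheck X,\widecheck u)$ to $(\widehat X,\widehat u)$ integrate to zero; (ii) the residual piece $\langle P(\alpha)(\widetilde C\widehat X+D\widehat u),\widetilde C\widehat X+D\widehat u\rangle$ produced by the diffusion coefficient of $\widecheck X$ cancels the $-\widetilde C^\top P\widetilde C$ contribution coming from \eqref{Riccati equation 3} while supplying exactly the $D^\top P D$ term needed to upgrade $R$ to $\widetilde R$ in the $\widehat u$-square; and (iii) all the infinite-horizon integrals are finite -- guaranteed by Proposition~\ref{prop:SDE-wellpose}, the estimate \eqref{uhat-wellpose}, and boundedness of the coefficients -- so that the passage $T\to\infty$ is legitimate. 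The admissibility step in the first paragraph is of the same nature, carried out on finite horizons and then passed to the limit by monotone convergence.
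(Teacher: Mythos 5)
Your proof is correct and follows essentially the same route as the paper: split the cost into fluctuation and conditional-mean parts, apply It\^{o}'s formula to $e^{-rt}\langle P\widecheck X,\widecheck X\rangle$ and $e^{-rt}\langle \widetilde P\widehat X,\widehat X\rangle$, substitute the two AREs, and complete the square; the paper completes the square in the full control $u$ rather than separately in $\widecheck u$ and $\widehat u$, but the two forms agree because the cross term has zero expectation under conditioning on $\mathcal F^{\alpha}_t$. Two of your steps go beyond what the paper writes down. First, you verify admissibility of the closed-loop control $u^{*}$ via a finite-horizon Lyapunov bound before evaluating the identity at $u=u^{*}$; the paper takes this for granted (appealing only to the remark after Assumption \eqref{H2}). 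Second, for the uniqueness of $P$ the paper argues only that the optimal control is unique and hence $P$ is unique, which is terser and arguably incomplete, since uniqueness of the optimal control process does not by itself pin down the feedback gains, let alone $P$; your detour through the auxiliary problem with $\widehat A=\widehat C=\widehat Q=0$, in which ARE \eqref{Riccati equation 3} collapses to ARE \eqref{Riccati equation 1} and polarization of the value function identifies $P(i)$, closes that gap cleanly at the cost of invoking the verification theorem a second time. The remaining bookkeeping you flag --- the vanishing of fluctuation/conditional-mean cross terms, the cancellation of the $\widetilde C^{\top}P\widetilde C$ contribution against the diffusion term of $\widecheck X$, and the passage $T\to\infty$ via \eqref{Xlim} and Proposition \ref{prop:SDE-wellpose} --- matches the paper's computation.
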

\begin{proof}
In order to use the AREs (\ref{Riccati equation 1}) and (\ref{Riccati equation 3})
rather than (\ref{Riccati equation 1}) and (\ref{Riccati equation 2}), we first
rewrite the cost functional (\ref{cost functional}) as
\begin{equation*}
\begin{aligned}
J(x,i;u(\cdot))
=\;&\frac{1}{2}E\bigg[\int_{0}^{\infty}e^{-rt}\Big(\langle QX,X\rangle
+\langle \widehat{Q}\widehat{X},\widehat{X}\rangle
+\langle Ru,u\rangle\Big)\dt\bigg]\\
=\;&\frac{1}{2}E\bigg[\int_{0}^{\infty}e^{-rt}\Big(\langle Q(X-\widehat{X}),X-\widehat{X}\rangle
+\langle \widetilde{Q}\widehat{X},\widehat{X}\rangle
+\langle Ru,u\rangle\Big)\dt\bigg].
\end{aligned}
\end{equation*}
Then, using the fact that $X(0)=\widehat{X}(0)=x$, we have
\begin{equation}\label{completion of squares 1}
\begin{aligned}
J(x,i;u(\cdot))=\;&J(x,i;u(\cdot))+\frac{1}{2}\langle \widetilde{P}(i)\widehat{X}(0),\widehat{X}(0)\rangle\\
&-\frac{1}{2}\langle P(i)(X(0)-\widehat{X}(0)),X(0)-\widehat{X}(0)\rangle
-\frac{1}{2}\langle \widetilde{P}(i)\widehat{X}(0),\widehat{X}(0)\rangle\\
=\;&J(x,i;u(\cdot))
+\frac{1}{2}\langle \widetilde{P}(i)x,x\rangle\\
&+\frac{1}{2}E\bigg[\int_{0}^{\infty}\dd e^{-rt}\langle P(X-\widehat{X}),X-\widehat{X}\rangle\bigg]
+\frac{1}{2}E\bigg[\int_{0}^{\infty}\dd e^{-rt}\langle \widetilde{P}\widehat{X},\widehat{X}\rangle\bigg]\\
=\;&\frac{1}{2}\langle \widetilde{P}(i)x,x\rangle
+\frac{1}{2}E\bigg[\int_{0}^{\infty}e^{-rt}\Big(\langle Q(X-\widehat{X}),X-\widehat{X}\rangle
+\langle \widetilde{Q}\widehat{X},\widehat{X}\rangle
+\langle Ru,u\rangle\Big)\dt\bigg]\\
&+\frac{1}{2}E\bigg[\int_{0}^{\infty}\Big(\dd e^{-rt}\langle P(X-\widehat{X}),X-\widehat{X}\rangle
+\dd\;\langle e^{-rt}\widetilde{P}\widehat{X},\widehat{X}\rangle\Big)\bigg].
\end{aligned}
\end{equation}
Note that
\begin{equation}\label{X-Xhat}
\begin{aligned}
\dd\;(X-\widehat{X})
=\;&[A(X-\widehat{X})+Bu-B\widehat{u} ]\dt+[CX+\widehat{C}\widehat{X}+Du]\dw\\
=\;&[A(X-\widehat{X})+Bu-B\widehat{u} ]\dt+[C(X-\widehat{X})+\widetilde{C}\widehat{X}+Du]\dw.
\end{aligned}
\end{equation}
On one hand, applying It\^{o}'s formula for regime switching diffusions to $P(X-\widehat{X})$,
it follows that
\begin{equation}\label{P X-Xhat}
\begin{aligned}
\dd\;[P(X-\widehat{X})]
=\;&\Big(P[A(X-\widehat{X})+Bu-B\widehat{u}]
+\sum_{j\in\mathcal{M}}\lambda_{\alpha(t),j}P(j)(X-\widehat{X})\Big)\dt\\
&+P[C(X-\widehat{X})+\widetilde{C}\widehat{X}+Du]\dw
+\sum_{i,j\in\mathcal{M}}[P(j)-P(i)](X-\widehat{X})\dd M_{ij}.
\end{aligned}
\end{equation}
Applying It\^{o}'s formula for semi-martingales to $e^{-rt}\langle P(X-\widehat{X}),X-\widehat{X}\rangle$
(only the drift terms are presented below), we get
\begin{equation}\label{P X-Xhat X-Xhat 1}
\begin{aligned}
&\dd e^{-rt}\langle P(X-\widehat{X}),X-\widehat{X}\rangle\\
=\;&e^{-rt}\bigg[-r\langle P(X-\widehat{X}),X-\widehat{X}\rangle\\
&\qquad+\Big\langle P[A(X-\widehat{X})+Bu-B\widehat{u}]
+\sum_{j\in\mathcal{M}}\lambda_{\alpha(t),j}P(j)(X-\widehat{X}),X-\widehat{X}\Big\rangle\\
&\qquad+\Big\langle P(X-\widehat{X}),A(X-\widehat{X})+Bu-B\widehat{u}\Big\rangle\\
&\qquad+\Big\langle P[C(X-\widehat{X})+\widetilde{C}\widehat{X}+Du],
C(X-\widehat{X})+\widetilde{C}\widehat{X}+Du\Big\rangle\bigg ]\dt+(\cdots),
\end{aligned}
\end{equation}
i.e.,
\begin{equation}\label{P X-Xhat X-Xhat 2}
\begin{aligned}
&\dd e^{-rt}\langle P(X-\widehat{X}),X-\widehat{X}\rangle\\
=\;&e^{-rt}\bigg[\Big\langle \Big[-rP+PA+A^{\top}P+C^{\top}PC+\sum_{j\in\mathcal{M}}\lambda_{\alpha(t),j}P(j)\Big]
(X-\widehat{X}),X-\widehat{X}\Big\rangle\\
&\qquad+2\langle PBu,X-\widehat{X}\rangle+\langle D^{\top}PDu,u\rangle\\
&\qquad+2\langle PDu,C(X-\widehat{X})+\widetilde{C}\widehat{X}\rangle
+\langle P\widetilde{C}\widehat{X},\widetilde{C}\widehat{X}\rangle\bigg ]\dt+(\cdots).
\end{aligned}
\end{equation}
On the other hand, applying It\^{o}'s formula for regime switching diffusions to $\widetilde{P}\widehat{X}$ yields
\begin{equation}\label{P tilde X hat}
\begin{aligned}
\dd\widetilde{P}\widehat{X}
=\Big(\widetilde{P}[(A+\widehat{A})\widehat{X}+B\widehat{u}]
+\sum_{j\in\mathcal{M}}\lambda_{\alpha(t),j}\widehat{P}(j)\widehat{X}\Big)\dt
+\sum_{i,j\in\mathcal{M}}[\widetilde{P}(j)-\widetilde{P}(i)]\widehat{X}\dd M_{ij}.
\end{aligned}
\end{equation}
Applying It\^{o}'s formula for semi-martingales to
$e^{-rt}\langle \widetilde{P}\widehat{X},\widehat{X}\rangle$, one has
\begin{equation}\label{P tilde X hat X hat 1}
\begin{aligned}
\dd e^{-rt}\langle \widetilde{P}\widehat{X},\widehat{X}\rangle
=\;&e^{-rt}\bigg[-r\langle \widetilde{P}\widehat{X},\widehat{X}\rangle
+\Big\langle \Big(\widetilde{P}[(A+\widehat{A})\widehat{X}+B\widehat{u}]
+\sum_{j\in\mathcal{M}}\lambda_{\alpha(t),j}\widetilde{P}(j)\widehat{X}\Big),\widehat{X}\Big\rangle\\
&\qquad+\Big\langle \widetilde{P}\widehat{X},(A+\widehat{A})\widehat{X}+B\widehat{u}\Big\rangle\bigg ]\dt+(\cdots),
\end{aligned}
\end{equation}
i.e.,
\begin{equation}\label{P tilde X hat X hat 2}
\begin{aligned}
\dd e^{-rt}\langle \widetilde{P}\widehat{X},\widehat{X}\rangle
=\;&e^{-rt}\bigg[\Big\langle\Big(-r\widetilde{P}+\widetilde{P}(A+\widehat{A})+(A+\widehat{A})^{\top}\widetilde{P}
+\sum_{j\in\mathcal{M}}\lambda_{\alpha(t),j}\widetilde{P}(j)\Big)\widehat{X},\widehat{X}\Big\rangle\\
&\qquad+2\Big\langle \widetilde{P}B\widehat{u},\widehat{X}\Big\rangle\bigg ]\dt+(\cdots).
\end{aligned}
\end{equation}
We first look at the drift terms involving $u$ and $\widehat{u}$, using (\ref{P X-Xhat}),
(\ref{P X-Xhat X-Xhat 2}), (\ref{P tilde X hat}), (\ref{P tilde X hat X hat 2}):
\begin{equation*}
\begin{aligned}
&u^{\top}(R+D^{\top}PD)u+2u^{\top}[B^{\top}P(X-\widehat{X})
+D^{\top}P(C(X-\widehat{X})+\widetilde{C}\widehat{X})+B^{\top}\widetilde{P}\widehat{X}]\\
=\;&u^{\top}\widetilde{R}u+2u^{\top}[S(X-\widehat{X})+\widetilde{S}\widehat{X}]\\
=\;&u^{\top}\widetilde{R}u
+2u^{\top}\widetilde{R}\widetilde{R}^{-1}[S(X-\widehat{X})+\widetilde{S}\widehat{X}]
+|\widetilde{R}^{-\frac{1}{2}}[S(X-\widehat{X})+\widetilde{S}\widehat{X}]|^{2}\\
&-|\widetilde{R}^{-\frac{1}{2}}[S(X-\widehat{X})+\widetilde{S}\widehat{X}]|^{2}\\
=\;&(u+\widetilde{R}^{-1}[S(X-\widehat{X})+\widetilde{S}\widehat{X}])^{\top}
\widetilde{R}(u+\widetilde{R}^{-1}[S(X-\widehat{X})+\widetilde{S}\widehat{X}])\\
&-|\widetilde{R}^{-\frac{1}{2}}[S(X-\widehat{X})+\widetilde{S}\widehat{X}]|^{2},
\end{aligned}
\end{equation*}
in which we have used
\begin{equation*}
\begin{aligned}
&E[\langle PB\widehat{u},X-\widehat{X}\rangle]
=E[\langle PBu,\widehat{X}-\widehat{X}\rangle]=0,\\
&E[\langle \widetilde{P}B\widehat{u},\widehat{X}\rangle]
=E[\langle \widetilde{P}Bu,\widehat{X}\rangle].
\end{aligned}
\end{equation*}
For the remainder terms, one has
\begin{equation*}
\begin{aligned}
&\langle(-Q+S^{\top}\widetilde{R}^{-1}S)(X-\widehat{X}),X-\widehat{X}\rangle
+\langle PC(X-\widehat{X}),\widetilde{C}\widehat{X}\rangle
+\langle P\widetilde{C}\widehat{X},\widetilde{C}\widehat{X}\rangle\\
&+\langle(-\widetilde{Q}-\widetilde{C}^{\top}P\widetilde{C}
+\widetilde{S}^{\top}\widetilde{R}^{-1}\widetilde{S})\widehat{X},\widehat{X}\rangle\\
=\;&-\langle Q(X-\widehat{X}),X-\widehat{X}\rangle
-\langle\widetilde{Q}\widehat{X},\widehat{X}\rangle
+|\widetilde{R}^{-\frac{1}{2}}[S(X-\widehat{X})+\widetilde{S}\widehat{X}]|^{2},
\end{aligned}
\end{equation*}
in which we have used
\begin{equation*}
\begin{aligned}
&E[\langle PC(X-\widehat{X}),\widetilde{C}\widehat{X}\rangle]
=E[\langle PC(\widehat{X}-\widehat{X}),\widetilde{C}\widehat{X}\rangle]=0,\\
&E[\langle \widetilde{R}^{-1}S(X-\widehat{X}),\widetilde{S}\widehat{X}\rangle]
=E[\langle \widetilde{R}^{-1}S(\widehat{X}-\widehat{X}),\widetilde{S}\widehat{X}\rangle]=0.
\end{aligned}
\end{equation*}
Finally, we arrive at
\begin{equation}\label{completion of squares 2}
\begin{aligned}
J(x,i;u(\cdot))
=\frac{1}{2}\langle \widetilde{P}(i)x,x\rangle
+E\bigg[\int_{0}^{\infty}e^{-rt}
|\widetilde{R}^{\frac{1}{2}}(u+\widetilde{R}^{-1}[S(X-\widehat{X})+\widetilde{S}\widehat{X}])|^{2}\dt\bigg].
\end{aligned}
\end{equation}
Then we deduce that $u^{*}(\cdot)$ defined by (\ref{optimal control}) is an optimal feedback control
and the minimum cost is given by
\begin{equation*}
\begin{aligned}
J(x,i;u^{*}(\cdot))=\frac{1}{2}\langle \widetilde{P}(i)x,x\rangle.
\end{aligned}
\end{equation*}
Since the optimal value is unique, the above implies $\widetilde{P}$ is the unique solution
to the ARE \eqref{Riccati equation 3}.

Other other hand, since our control problem admits at most one optimal control, the optimal
feedback control $u^{*}(\cdot)$ defined by (\ref{optimal control}) is unique, which further
implies that ${P}$ is the unique solution to the ARE \eqref{Riccati equation 1}.
The proof is now completed.
\end{proof}

\section{Numerical experiments}\label{Section Numerical experiments}

In this section, we provide a numerical example to demonstrate our theoretical results obtained
in the previous sections.
Let $n=k=1$ and $r=3$. Consider the state equation \eqref{state equation} involving
a four-state Markov chain $\alpha$ (i.e., $\mathcal{M}=\{1,2,3,4\}$), whose generator
is given by
$$
Q=\left[
\begin{matrix}
	-3&		2&		1&		0\\
	2&		-3&		0&		1\\
	1&		0&		-3&		2\\
	0&		1&		2&		-3\\
\end{matrix}
\right].
$$
The generator $Q$ means that the Markov chain jumps within the groups $\{1,2\}$ and $\{3,4\}$
with transition rate 2, whereas less frequently between the two groups with transition rate 1.
In addition, the values of the coefficients of state equation \eqref{state equation}
and cost functional \eqref{cost functional} are given in Table \ref{tab:model_parameters_states}.
\begin{table}[htbp]
\centering
\renewcommand\arraystretch{1.5}
\begin{tabular}{cccccccccc}
\hline
& $A(i)$ & $\widehat{A}(i)$ & $B(i)$ & $C(i)$ & $\widehat{C}(i)$ & $D(i)$ & $Q(i)$ & $\widehat{Q}(i)$ & $R(i)$\\
\hline
$i=1$ & 1 & 0 & 1 & 0.5 & 0.2 & 0.4 & 1 & 1 & 0.5 \\
$i=2$ & 1 & 0 & 1.5 & 0.5 & 0.5 & 0.6 & 0.5 & 2 & 0.5 \\
$i=3$ & -1 & 1 & 2 & 0.5 & 0.2 & 0.4 & 1 & 2 & 0.5 \\
$i=4$ & -1 & 1 & 2.5 & 0.5 & 0.5 & 0.6 & 0.5 & 1 & 0.5 \\
\hline
\end{tabular}
\caption{Model parameters for different states.}
\label{tab:model_parameters_states}
\end{table}

In view of that the optimal control (\ref{optimal control}) depends only on the solutions $P$
and $\widetilde{P}$ to Riccati equations (\ref{Riccati equation 1}) and (\ref{Riccati equation 3}),
respectively. So, in order to implement our control policy in practice, the whole task for us
is to compute (\ref{Riccati equation 1}) and (\ref{Riccati equation 3}).
Now, we first introduce the concept of residual error for numerical solutions to AREs.
Taking the ARE (\ref{Riccati equation 1}) as an example. Let $P=(P(1),\ldots,P(M))$ denote
a numerical solution to (\ref{Riccati equation 1}). The residual for each $i\in\mathcal{M}$
is defined as
\begin{equation*}
\begin{aligned}
E_i(P)\doteq P(i)A(i)+A^\top(i)P(i)+C^\top(i)P(i)C(i)+Q(i)+\sum_{j\in\mathcal{M}}\lambda_{ij}P(j)
-S^\top(i)\widetilde{R}^{-1}(i)S(i)-rP(i).
\end{aligned}
\end{equation*}
Then, the residual error is calculated as $\|E_i(P)\|$, where $\|\cdot\|$ represents the Frobenius
norm in $\mathbb{S}^n$.

The procedure for solving the ARE (28), based on the so-called quasi-linearization method
via Lyapunov equations, is given by:
\begin{itemize}
\item Step 1: Solve the initial Lyapunov equation \eqref{Lya-1} to obtain the initial solution $P_0$;
\item Step 2: For each iteration $k$, update $\Theta_k$, $A_k$, $C_k$, and $Q_k$ as in equation \eqref{theta-def},
then solve the Lyapunov equation \eqref{Lya-k} to get $P_{k+1}$;
\item Step 3: At each iteration $k$, compute the residual error $\|E_i(P_k)\|$; the iteration stops
when $\|E_i(P_k)\|\leq 1\times 10^{-10}$ for all $i\in\mathcal{M}$.
\end{itemize}

Using this method, we solve ARE (\ref{Riccati equation 1}) and obtain a positive solution
$P(1)=0.361$, $P(2)=0.259$, $P(3)=0.171$, $P(4)=0.117$ with residual 1e-10.
Then, we solve ARE (\ref{Riccati equation 3}) and obtain a positive solution $\widetilde{P}(1)=0.687$,
$\widetilde{P}(2)=0.617$, $\widetilde{P}(3)=0.448$, $\widetilde{P}(4)=0.307$ with residual 1e-10.
Hence, the optimal feedback control law (\ref{optimal control}) is given by
\begin{equation*}
\begin{aligned}
u^{*}(x,i)=-\widetilde{R}^{-1}(i)\left[S(i)x+\widehat{S}(i)\widehat{x}\right]
\doteq\Theta(i)x+\widehat{\Theta}(i)\widehat{x},
\end{aligned}
\end{equation*}
with
$$
\left[
\begin{array}{c}
	\Theta(1)\\
	\Theta(2)\\
	\Theta(3)\\
	\Theta(4)\\
\end{array}
\right]
=\left[
\begin{array}{c}
	-0.776\\
	-0.786\\
	-0.714\\
	-0.603\\
\end{array}
\right],\quad
\left[
\begin{array}{c}
	\widehat{\Theta}(1)\\
	\widehat{\Theta}(2)\\
	\widehat{\Theta}(3)\\
	\widehat{\Theta}(4)\\
\end{array}
\right]
=\left[
\begin{array}{c}
	-1.322\\
	-1.494\\
	-1.498\\
	-1.125\\
\end{array}
\right].
$$
Substituting the above optimal $u^{*}$ into the state equation \eqref{state equation},
we obtain the optimal state process:
\begin{equation}\label{optimal state}
\left\{
\begin{aligned}
\dd X^*(t)=\;&\left[(A+B\Theta)X^*+(\widehat{A}+B\widehat{\Theta})\widehat{X}^*\right]\dt
+\left[(C+D\Theta)X^*+(\widehat{C}+D\widehat{\Theta})\widehat{X}^*\right]\dw,\\
X^*(0)=\;&x,\quad \alpha(0)=i.
\end{aligned}
\right.
\end{equation}
By Proposition \ref{prop:SDE-wellpose}, we can verify that \eqref{optimal state}
indeed admits a unique solution $X^*\in L_{\mathcal{F}}^{2,r}(0,\infty;\mathbb{R})$.

\begin{figure}[h]
\centering
\includegraphics[width=4in]{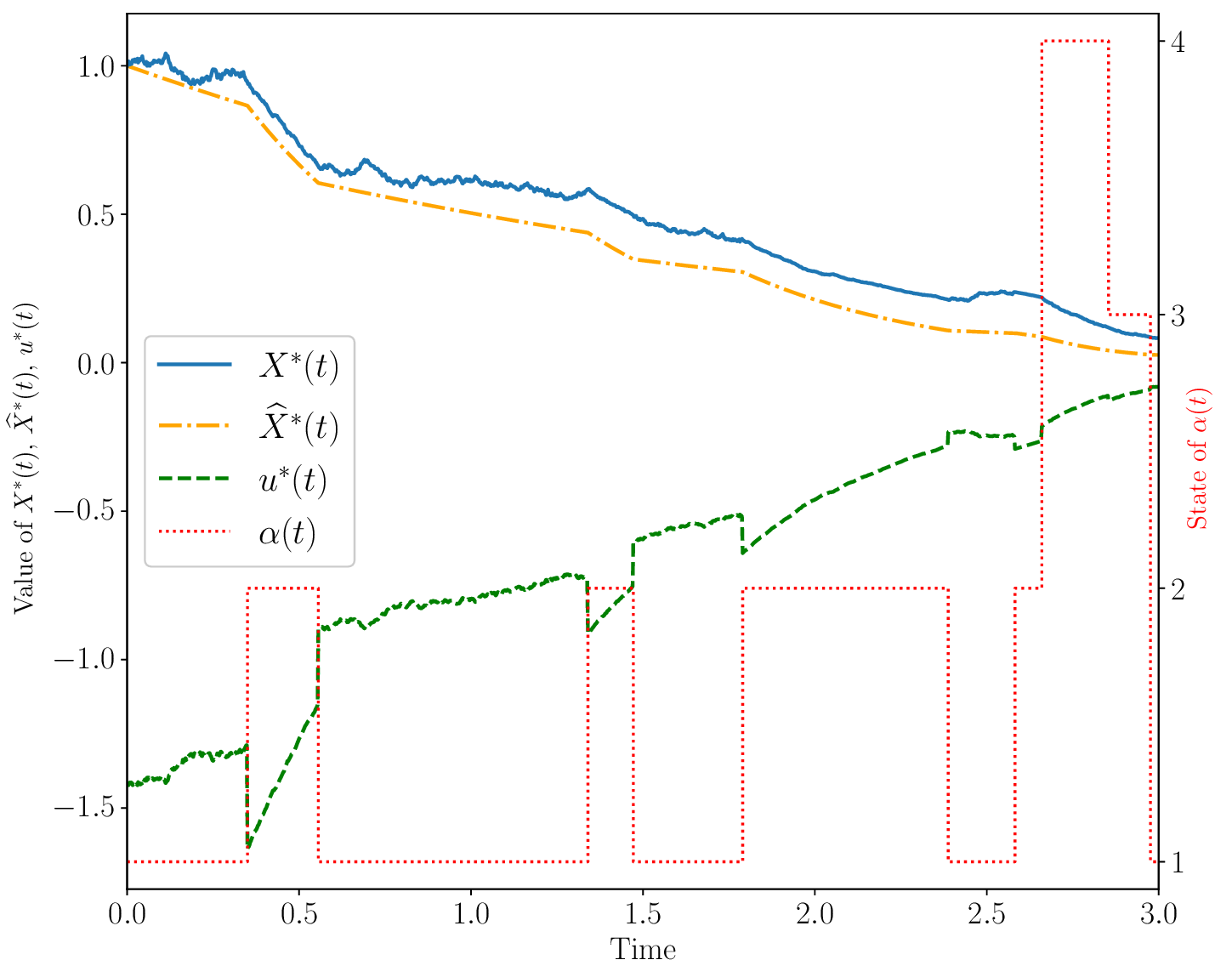}
\caption{Simulations of $X^*$, $\widehat{X}^*$, $u^*$, $\alpha$.}
\label{Fig1}
\end{figure}

In Figure \ref{Fig1}, we provide simulation results for $X^*$, $\widehat{X}^*$, $u^*$, $\alpha$. Note that:
(\romannumeral1) the trajectory of the conditional mean $\widehat{X}^*$ is more smooth than that of $X^*$,
(\romannumeral2) $X^{*}$ (and $\widehat{X}^*$) tend to 0 under $u^*$ in order to achieve
the stability of the state equation and the optimality of the cost functional, and
(\romannumeral3) $u^{*}$ (as a \emph{direct} feedback of $\alpha$)
displays a \emph{sharp} change immediately the Markov chain jumps from one state to another.

\section{Concluding remarks}\label{Section Concluding remarks}

In this paper, we systematically investigate an infinite horizon LQ optimal control problem
for mean-field switching diffusions. To ensure system stability, a discounted framework
is employed. The main contributions of this work include:
\begin{itemize}
\item Establishing the well-posedness and asymptotic properties of solutions to some infinite horizon
mean-field SDEs and backward SDEs with Markov chains;
\item Proving the solvability of two associated algebraic Riccati equations; and
\item Verifying the optimality of the proposed control strategy using a multi-step completion of squares method.
\end{itemize}

Several open questions warrant further exploration. First, while this paper assumes the standard condition
(see Assumption \eqref{H2}) on the coefficients of the LQ problem, it would be natural to extend the analysis
to the indefinite case, where the state and control weight matrices in the cost functional are not necessarily
positive definite. Second, the current formulation is set in a Markovian framework. A promising direction
is to generalize the results to a non-Markovian setting with random coefficients, as explored in
Hu et al. \cite{HSX2022AAP} and Wen et al. \cite{WLXZ2023}. Finally, in scenarios where the Markov chain has
a large state space or exhibits a two-time-scale structure, solving the Hamiltonian system and AREs -- either
analytically or numerically -- becomes computationally challenging. In such cases, a singular perturbation
approach could be employed to reduce system dimensionality and alleviate computational burden, as discussed
in Yin and Zhang \cite{YinZhang2013}.

\section*{Acknowledgements}

The authors sincerely thank Prof. Xin Zhang for helpful discussions and suggestions,
which led to much improvement of this paper.

\end{document}